\theoremstyle{plain}
\newtheorem{theorem}{Theorem}[section]
\newtheorem{lemma}{Lemma}[section]
\newtheorem{corol}{Corollary}[theorem]
\theoremstyle{definition}
\newtheorem{definition}{Definition}
\newtheorem{remark}{\textup{Remark}}
\numberwithin{equation}{section}
\mathchardef\mhyphen="2D
 \newcommand{\abs}[1]{\left\vert #1\right\vert}%
\newcommand{\myp}[2][0cm]{\mathopen{}\left(#2\parbox[h][#1]{0cm}{}\right)}
\newcommand{\myb}[2][0cm]{\mathopen{}\left[#2\parbox[h][#1]{0cm}{}\right]}
 \newcommand{\set}[1]{\left\{ #1\right\}}%
\DeclareMathOperator{\RE}{\sf Re} %
\DeclareMathOperator{\Min}{Min} %
\DeclareMathOperator{\Max}{Max} %
\newcommand{\rs}[1]{\RE{\left\{ #1\right\}}} %
\newcommand{\A}{\mathcal{A}_p}%
\newcommand{\e}{\mathrm{e}}%
\renewcommand{\i}{\mathrm{i}}%
\newcommand{\q}{\mathfrak{q}}%
\newcommand{\p}{\mathfrak{p}}%
\newcommand{\ddash}[2]{
  {#1}\hspace{.5pt}\mbox{-}\nobreak{#2}%
}
\DeclareSymbolFont{symbolsC}{U}{pxsyc}{m}{n}
\DeclareMathSymbol{\coloneqq}{\mathrel}{symbolsC}{"42}
\begin{document}

\title[On the first-order differential subordination and
 $\cdots$ ]%
{On the  first-order differential subordination and
superordination results for $\bm{p}\,$-\hspace{0pt}valent  functions}
\author[V. S.  Masih, A. Ebadian, Sh. Najafzadeh]%
{Vali Soltani Masih, Ali Ebadian, Shahram Najafzadeh}

\newcommand{\acr}{\newline\indent}

\address{\textbf{Vali Soltani Masih}, Department of Mathematics\acr
                   Pyame Noor University(PNU)\acr
                   P.O. Box: 19395-3697\acr
                   Tehran, Iran}
\email{\url{masihvali@gmail.com; v\_soltani@pnu.ac.ir}}

\address{\textbf{Ali Ebadian}, Department of Mathematics\acr
                   Faculty of science\acr
                   Urmia university\acr
                   Urmia, Iran}
\email{\url{ebadian.ali@gmail.com}}

\address{\textbf{Shahram Najafzadeh}, Department of Mathematics\acr
                   Pyame Noor University(PNU)\acr
                   P.O. Box: 19395-3697\acr
                   Tehran, Iran}
\email{\url{najafzadeh1234@yahoo.ie}}

\subjclass[2010]{Primary 30C80, 30A10; Secondary 30C45} 
\keywords{\ddash{$p$}{valent} functions, \ddash{$p$}{valent}  \ddash{$\uplambda$}{Spirallike} of 
complex order $b$ and type $\upalpha$, \ddash{$p$}{valent} \ddash{$\uplambda$}{Robertson} of 
complex order $b$ and type $\upalpha$,  Convex functions, Differential subordination and  superordination}

\begin{abstract}
In this paper, we obtain some application of first-order differential subordination, superordination and sandwich-type results involving operator for certain normalized \ddash{$p$}{valent} analytic functions. Further, properties of \ddash{$p$}{valent} functions such as; \ddash{$\uplambda$}{spirallike} and \ddash{$\uplambda$}{Robertson} of complex order are considered.
\end{abstract}
\maketitle
\section{Introduction}
Let $\mathcal{H}(\mathbb{U})$ denote the class of holomorphic functions in the open unit disc 
$\mathbb{U}\coloneqq\left\{z\in \mathbb{C}\colon |z|<1\right\}$ on the
complex plane $\mathbb{C}$, and let $\mathcal{H}[a,n]$ denote the subclass of the functions $\mathfrak{p}\in \mathcal{H}(\mathbb{U})$ of the form:
\[
\mathfrak{p}(z)=a+a_nz^n+\cdots;\qquad \myp{a\in \mathbb{C},\:n\in \mathbb{N}\coloneqq\set{1, 2, \ldots}}.
\]
   Let $\A$ denote the class of all \emph{$p$\nobreakdash-\hspace{0pt}valent}  functions $f\in \mathcal{H}$ of the following  form:
\begin{equation}\label{funh}
f(z)= z^p+\sum_{k=p+1}^{\infty}a_k z^k,
  \end{equation}
which are analytic  in the open unit disk $\mathbb{U}$. The class $\mathcal{A}_1$ denoted by $\mathcal{A}$. 

Let $g$ and $h$ be analytic in $\mathbb{U}$. We
say that the function $g$ is \emph{subordinate} to $h$, or the function $h$ is \emph{superordinate} to $g$, and express it by $g\prec h$ or conventionally by $g(z)\prec h(z)$ if $g = h\circ \upomega$
for some analytic map $\upomega\colon \mathbb{U} \rightarrow \mathbb{U}$ with $\upomega(0)=0$. When $h$ is univalent, the condition $g\prec h$ is
equivalent to $g(\mathbb{U})\subset h(\mathbb{U})$ and $g(0)=h(0)$.

 For some non-zero complex numbers $b$ and real $\uplambda$; $\myp{\abs{\uplambda}<\frac{\uppi}{2}}$, we define  classes $\mathcal{S}^{\uplambda}_p(\upalpha,b)$ and $ \mathcal{K}^{\uplambda}_p(\upalpha,b)$
 as follows: 
\begin{align*}
\mathcal{S}^{\uplambda}_p(\upalpha,b)&\coloneqq\set{f\in 
\A\colon\:\mathsf{Re}\left(\frac{1}{b\cos\uplambda}\myb{\e^{\i\uplambda}
            \frac{zf'(z)}{pf(z)}-\myp{1-b}\cos\uplambda-\i\sin\uplambda}\right)
        >\upalpha},
\intertext{and}
 \mathcal{K}^{\uplambda}_p(\upalpha,b)&\coloneqq\biggl\{f\in 
 \A\colon\:\mathsf{Re}\biggl(\frac{1}{b\cos\uplambda}    \biggl[\frac{\e^{\i\uplambda}}{p}
         \myp{1+\frac{zf''(z)}{f'(z)}}-
             \myp{1-b}\cos\uplambda-\i\sin\uplambda\biggr]\biggr)>\upalpha\biggr\}.
\end{align*}
 For a function $f$ belonging to the class 
{\mbox{\small$\mathcal{S}^{\uplambda}_p(\upalpha,b)$}}, we say that $f$ is 
\emph{multivalent $\uplambda\mbox{-}$spirallike of 
complex order $b$ and type $\upalpha$}; $\myp{0\leq \upalpha<1}$ in 
$\mathbb{U}$. Also for a function  $f$ belonging to the class 
{\mbox{\small$\mathcal{K}^{\uplambda}_p(\upalpha,b)$}}, we say that $f$ is 
\emph{multivalent $\uplambda\mbox{-}$Robertson of 
complex order $b$ and type $\upalpha$}; $\myp{0\leq \upalpha<1}$ in 
$\mathbb{U}$. This classes for $\upalpha=0$ were introduced and studied  by Ai-Oboudi and Haidan \cite{AH}.

 In particular  for $p=b=1$,  we denote
 \[
 \mathcal{S}^{\uplambda}(\upalpha)\coloneqq \mathcal{S}^{\uplambda}_1(\upalpha,1),
\]
is the class of \emph{$\uplambda$\nobreakdash-\hspace{0pt}spirallike functions of order $\upalpha$ with $0\leq \upalpha<1$} and
\[
\mathcal{K}^{\uplambda}(\upalpha)\coloneqq\mathcal{K}^{\uplambda}_1(\upalpha,1),
\]
is the class of \emph{$\uplambda$\nobreakdash-\hspace{0pt}Robertson functions of order $\upalpha$ with $0\leq \upalpha<1$}.

 Let $\upeta$ and $\upmu$ be complex numbers  not both equal to zero and $f\in \A$ given by \eqref{funh}. Define the differential operator $\mathscr{F}_{p}^{\upeta,\upmu}\colon \A \longrightarrow \mathcal{H}[1,1]$ as follows:
\begin{equation}\label{diff}
\mathscr{F}_{p}^{\upeta,\upmu}[f](z)\coloneqq\myb{\frac{f'(z)}{pz^{p-1}}}^{\upeta} \myb{\frac{z^p}{f(z)}}^{\upmu}=1+\myp{\upeta-\upmu+\frac{\upeta}{p}}a_{p+1}z+ \cdots; \quad \myp{z\in \mathbb{U}}, 
\end{equation}
with $\mathscr{F}_{p}^{\upeta,\upmu}[f](z)\Bigr|_{z=0}=1$. Here, all powers are mean as principal values (see \cite{EMN}). \smallskip

\section{Definitions and Preliminaries}
In order to achieve our aim in this section, we recall some definitions and preliminary results from the theory of differential subordination and superordination.
\begin{definition}[\cite{MM2,MM3}]

Let $\uppsi\colon\mathbb{C}^2\times \mathbb{U}\rightarrow \mathbb{C}$ and the function $h(z)$ be
univalent in $\mathbb{U}$. If the function $\p(z)$ is analytic in $\mathbb{U}$ and satisfies the following first-order differential subordination
\begin{equation}\label{first-order}
\uppsi\myp{\mathfrak{p}(z),z\mathfrak{p}'(z);z}\prec h(z); \qquad \myp{z\in \mathbb{U}},
\end{equation}
then $\p(z)$ is called a \emph{solution} of the differential subordination.

  A function $\mathfrak{q}\in\mathcal{H}$ is said to be a \emph{dominant} of the differential subordination \eqref{first-order} if $\mathfrak{p}\prec \mathfrak{q}$ for all $\mathfrak{p}$ satisfying \eqref{first-order}.
An univalent dominant that satisfies $\mathfrak{\tilde{q}}\prec \mathfrak{q}$ for all dominants $\mathfrak{q}$ of \eqref{first-order}, is said to be \emph{best dominant} of the differential subordination.\smallskip
\end{definition}
\begin{definition}[\cite{MM4}]
Let $\upvarphi\colon \mathbb{C}^2\times \mathbb{U}\rightarrow \mathbb{C}$ and the function $h(z)$ be univalent in $\mathbb{U}$. If the
function $\mathfrak{p}(z)$ and $\upvarphi\myp{\p(z), z\p'(z); z}$ are univalent in $\mathbb{U}$ and satisfies the following first-order differential superordination
\begin{equation}\label{first-order-super}
h(z)\prec \upvarphi\myp{\p(z), z\p'(z); z}; \qquad \myp{z\in \mathbb{U}},
\end{equation}
then $h(z)$ is called a \emph{solution} of the differential superordination.

An analytic function $\q\in \mathcal{H}$ is called a \emph{subordinant} of the solution of the differential superordination \eqref{first-order-super}, or more simply a subordinant if $\q\prec \p$ for all the functions $\p$ satisfying \eqref{first-order-super}. An univalent subordinant that satisfies $\q\prec \tilde{\q}$ for all of the subordinants $\q$ of
\eqref{first-order-super}, is said to be the \emph{best subordinant}.

  Miller and Mocanu \cite{MM4} obtained sufficient condition on the functions $\p$ and $\q$ for which the following implication holds:
\[
h(z)\prec \upvarphi\myp{\p(z), z\p'(z); z} \Longrightarrow \q(z)\prec \p(z).
\]
\end{definition}
Using these results, in \cite{bul2} were obtained sufficient conditions for 
certain normalized analytic function f to satisfy
\[
\q_1(z) \prec \frac{zf'(z)}{f(z)}\prec \q_2(z),
\]
where $\q_1(z)$ and $\q_2(z)$ are given univalent normalized function in $\mathbb{U}$.\smallskip
\begin{definition}[cf. Miller and Mocanu{\cite[Definition 2.2b, 
p.21]{MM1}}]
Denote by $\mathcal{Q}$, the
set of all functions $f(z)$ that are analytic and injective on
$\overline{\mathbb{U}}\setminus \mathbf{E}(f)$, where
\[
 \mathbf{E}(f)\coloneqq\left\{\upzeta\colon \:\: \upzeta \in\partial \mathbb{U}\quad \text{and}\quad \lim_{z\rightarrow \upzeta}
f(z)=\infty \right\},
\]
  and are such that $\Min\abs{f'(\upzeta)}=\rho >0$ for
$\upzeta\in\partial\mathbb{U}\setminus \mathbf{E}(f)$.
\end{definition}
\begin{lemma}[cf. Miller and Mocanu{\cite[Theorem 3.4h, p.132]{MM1}}]\label{lem1}
Let $\q$ be univalent in $\mathbb{U}$, and let $\upvarphi$ and $\uptheta$ be analytic in a domain $\Omega$ containing $\q\myp{\mathbb{U}}$, with $\upvarphi(w)\neq 0$ when $w\in \q\myp{\mathbb{U}}$. Set $Q(z)\coloneqq z\q'(z)\upvarphi(\q(z))$; $h(z)\coloneqq\uptheta(\q(z))+Q(z)$ and suppose that
\begin{itemize}
\item[(i)]  
$Q(z)$ is starlike function in $\mathbb{U}$,
 \item[(ii)]
$\rs{\frac{zh'(z)}{Q(z)}}=\rs{\frac{\uptheta'\myp{\q(z)}}{\upvarphi\myp{\q(z)}}+\frac{zQ'(z)}{Q(z)}}>0$ for  $z\in \mathbb{U}.$
\end{itemize}
If $\mathfrak{p}(z)$ is analytic in $\mathbb{U}$, with $\mathfrak{p}(0)=\q(0),\;
\mathfrak{p}\myp{\mathbb{U}}\subset \Omega$ and
\begin{equation} \label{eq8}
    \uptheta(\mathfrak{p}(z))+z\mathfrak{p}'(z)\upvarphi(\mathfrak{p}(z))\prec
    \uptheta(\q(z))+z \q'(z)\upvarphi(\q(z))=h(z); \qquad z\in \mathbb{U},
\end{equation}
then $\mathfrak{p}(z)\prec \q(z)$ and $\q$ is the best
dominant of Eq.~\eqref{eq8}.
\end{lemma}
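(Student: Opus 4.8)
The plan is to recognise this as the fundamental admissibility result of Miller and Mocanu and to prove it by the boundary-point method. First I would introduce the function $\uppsi\colon\mathbb{C}^2\to\mathbb{C}$ defined by $\uppsi(r,s)\coloneqq\uptheta(r)+s\upvarphi(r)$, so that the subordination hypothesis \eqref{eq8} reads $\uppsi\myp{\mathfrak{p}(z),z\mathfrak{p}'(z)}\prec h(z)$, where $h(z)=\uppsi\myp{\q(z),z\q'(z)}=\uptheta(\q(z))+Q(z)$. Conditions (i) and (ii) are precisely the requirement that $h$ be \emph{close-to-convex} with respect to the starlike function $Q$: since $\RE\myp{zh'(z)/Q(z)}>0$ while $Q$ is starlike with $Q(0)=0$, the function $h$ is univalent in $\mathbb{U}$, which is what makes the subordination $\prec h$ meaningful and endows $h$ with well-defined boundary behaviour.

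The core of the argument is a proof by contradiction resting on the Miller--Mocanu boundary lemma. After the standard dilation reduction --- replacing $\q$ by $\q_\rho(z)\coloneqq\q(\rho z)$ for $\rho<1$, so that $\q_\rho$ is analytic and univalent on $\overline{\mathbb{U}}$ with $\q_\rho'\neq0$ there, proving the claim for each such $\rho$ and letting $\rho\to1^-$ --- I would suppose that $\mathfrak{p}\not\prec\q$. Because $\mathfrak{p}(0)=\q(0)$ and $\q$ is univalent, the image $\mathfrak{p}(\mathbb{U})$ then fails to be contained in $\q(\mathbb{U})$, and the boundary lemma (Miller--Mocanu \cite{MM1}, Lemma~2.2d) furnishes points $z_0\in\mathbb{U}$, $\upzeta_0\in\partial\mathbb{U}\setminus\mathbf{E}(\q)$ and a real number $m\geq1$ with
\[
\mathfrak{p}(z_0)=\q(\upzeta_0)\qquad\text{and}\qquad z_0\mathfrak{p}'(z_0)=m\,\upzeta_0\q'(\upzeta_0).
\]

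Evaluating $\uppsi$ at this critical pair is where the hypotheses are spent. Substituting yields
\[
\uppsi\myp{\mathfrak{p}(z_0),z_0\mathfrak{p}'(z_0)}=\uptheta(\q(\upzeta_0))+m\,\upzeta_0\q'(\upzeta_0)\upvarphi(\q(\upzeta_0))=h(\upzeta_0)+(m-1)Q(\upzeta_0),
\]
and I would then show this value lies outside $h(\mathbb{U})$, which contradicts $\uppsi\myp{\mathfrak{p}(z),z\mathfrak{p}'(z)}\prec h(z)$. This is the step I expect to be the main obstacle: it requires translating the analytic conditions (i) and (ii) into the geometric statement that, for $m\geq1$, the point $h(\upzeta_0)+(m-1)Q(\upzeta_0)$ cannot re-enter the image $h(\mathbb{U})$. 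The mechanism is that close-to-convexity of $h$ with respect to the starlike $Q$ forces the half-line $\{h(\upzeta_0)+tQ(\upzeta_0)\colon t\geq0\}$ to leave $h(\mathbb{U})$ at the boundary point $h(\upzeta_0)$, so that for $t=m-1\geq0$ it remains in the complement; verifying this non-re-entry uniformly in $\upzeta_0$, together with the univalence of $h$, are the delicate points.

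Finally, the best-dominant assertion is immediate. Taking $\mathfrak{p}=\q$ shows that $\q$ itself satisfies \eqref{eq8} with equality, so $\q$ is a dominant; and if $\tilde{\q}$ is any dominant, then by definition $\mathfrak{p}\prec\tilde{\q}$ for every solution $\mathfrak{p}$ of the subordination, and applying this to the particular solution $\mathfrak{p}=\q$ gives $\q\prec\tilde{\q}$. Hence $\q$ is the best dominant, completing the proof.
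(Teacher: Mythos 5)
The paper offers no proof of this lemma to compare against: it is quoted, with a citation, from Miller and Mocanu's monograph (Theorem 3.4h), so the only question is whether your reconstruction of that theorem's proof is sound. In outline it is, and it is the standard one: setting $\uppsi(r,s)=\uptheta(r)+s\upvarphi(r)$, passing to the dilations $\q(\rho z)$, invoking the Miller--Mocanu boundary lemma to obtain $z_0\in\mathbb{U}$, $\upzeta_0\in\partial\mathbb{U}\setminus\mathbf{E}(\q)$ and $m\geq 1$ with $\p(z_0)=\q(\upzeta_0)$ and $z_0\p'(z_0)=m\upzeta_0\q'(\upzeta_0)$, computing $\uppsi\myp{\p(z_0),z_0\p'(z_0)}=h(\upzeta_0)+(m-1)Q(\upzeta_0)$, and closing with the correct best-dominant argument (every dominant $\tilde{\q}$ must dominate the particular solution $\p=\q$).

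The genuine gap is the step you yourself flag as the main obstacle and then only describe: the claim that $h(\upzeta_0)+(m-1)Q(\upzeta_0)\notin h(\mathbb{U})$ for all $m\geq 1$ and all admissible $\upzeta_0$. Saying that close-to-convexity ``forces the half-line to leave $h(\mathbb{U})$ and not re-enter'' is the conclusion restated, not an argument; a local normal-direction computation at $h(\upzeta_0)$ only rules out re-entry for small $t=m-1$, whereas the theorem needs it for all $t\geq 0$. This is exactly where hypotheses (i) and (ii) are consumed, via the separate geometric lemma that if $Q$ is starlike and $\rs{zh'(z)/Q(z)}>0$ then $h(\upzeta)+tQ(\upzeta)\notin h(\mathbb{U})$ for $|\upzeta|=1$ and $t\geq 0$; Miller and Mocanu prove this globally by comparing the winding of the tangent direction of the boundary curve $h(\e^{\i\theta})$ with the monotonically increasing argument of $Q(\e^{\i\theta})$. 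Without supplying that lemma the proof is an accurate roadmap rather than a complete argument; everything else in your proposal is correct.
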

\begin{lemma}[\cite{SRS}]\label{SRS}
Let $\q(z)$ be convex function in $\mathbb{U}$ and $\upgamma\in \mathbb{C}$ with $\rs{\upgamma}>0$. If $\mathfrak{p}(z)\in \mathcal{H}[\q(0),1]\cap \mathcal{Q}$ and  $ \p(z) +\upgamma z \p'(z)$ is univalent in $\mathbb{U}$, then
\begin{equation}\label{lem6}
 \q(z) +\upgamma z \q'(z) \prec  \p(z) +\upgamma z \p'(z)
\end{equation}
implies $\q(z)\prec \p(z)$ and $\q(z)$ is the best subordinant of Eq.~\eqref{lem6}.
\end{lemma}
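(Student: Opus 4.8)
The plan is to obtain this statement as the special case of the general first-order superordination theorem of Miller and Mocanu \cite{MM4}, which is the exact dual of Lemma~\ref{lem1}. In that theorem one fixes analytic functions $\uptheta,\upvarphi$ on a domain $\Omega\supseteq\q(\mathbb{U})$ with $\upvarphi(\q(z))\neq 0$, sets $Q(z)\coloneqq z\q'(z)\upvarphi(\q(z))$, and under the hypotheses that $Q$ is starlike in $\mathbb{U}$ and $\rs{\uptheta'(\q(z))/\upvarphi(\q(z))}>0$, concludes that $\uptheta(\q)+z\q'\upvarphi(\q)\prec \uptheta(\p)+z\p'\upvarphi(\p)$ forces $\q\prec\p$, with $\q$ the best subordinant, provided $\p\in\mathcal{H}[\q(0),1]\cap\mathcal{Q}$ and $\uptheta(\p)+z\p'\upvarphi(\p)$ is univalent in $\mathbb{U}$. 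I would therefore specialize to the constant choices $\uptheta(w)\coloneqq w$ and $\upvarphi(w)\coloneqq\upgamma$, so that $\uptheta(\p)+z\p'\upvarphi(\p)=\p+\upgamma z\p'$ and likewise $\uptheta(\q)+z\q'\upvarphi(\q)=\q+\upgamma z\q'$. Since $\uptheta$ and $\upvarphi$ are entire, one may take $\Omega=\mathbb{C}$, so the domain conditions $\q(\mathbb{U})\subset\Omega$ and $\p(\mathbb{U})\subset\Omega$ are automatic, while the univalence of $\p+\upgamma z\p'$ and the membership $\p\in\mathcal{H}[\q(0),1]\cap\mathcal{Q}$ are precisely the standing hypotheses of the lemma.

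It then remains to check the two structural conditions for these choices. First, $\upvarphi(\q(z))=\upgamma\neq 0$ because $\rs{\upgamma}>0$, and $\uptheta'(\q(z))/\upvarphi(\q(z))=1/\upgamma$ has real part $\rs{1/\upgamma}=\rs{\upgamma}/\abs{\upgamma}^2>0$; this dispatches the real-part hypothesis immediately. Second, with $Q(z)=\upgamma z\q'(z)$ one computes
\[
\frac{zQ'(z)}{Q(z)}=1+\frac{z\q''(z)}{\q'(z)},
\]
so that $\rs{zQ'(z)/Q(z)}>0$ is exactly the assertion that $\q$ is convex. Hence the convexity of $\q$ is precisely what guarantees that $Q$ is starlike in $\mathbb{U}$, and the nonvanishing constant multiplier $\upgamma$ plays no role in starlikeness with respect to the origin.

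With both conditions verified, the general theorem delivers $\q(z)\prec\p(z)$ and identifies $\q$ as the best subordinant of \eqref{lem6}, which is the assertion. The only step requiring genuine care is the starlikeness of $Q$: it rests on the equivalence displayed above between $\rs{zQ'/Q}>0$ and convexity of $\q$, and one should note in passing that a convex (hence univalent) $\q$ has $\q'(z)\neq 0$ throughout $\mathbb{U}$, so that the quotient $z\q''/\q'$ is well defined. Everything else is a direct substitution of the chosen $\uptheta$ and $\upvarphi$ into the hypotheses of the Miller--Mocanu superordination theorem.
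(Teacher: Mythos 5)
Your derivation is correct. Note, however, that the paper offers no proof of this lemma at all: it is quoted verbatim from \cite{SRS} as a known preliminary, so there is nothing internal to compare against. Your route --- specializing the Miller--Mocanu superordination theorem (the dual of Lemma~\ref{lem1}) to $\uptheta(w)=w$, $\upvarphi(w)=\upgamma$, checking $\rs{1/\upgamma}>0$ and identifying starlikeness of $Q(z)=\upgamma z\q'(z)$ with convexity of $\q$ --- is exactly how the cited source establishes the result, and all the hypotheses (univalence of $\q$ via convexity, $\p\in\mathcal{H}[\q(0),1]\cap\mathcal{Q}$, univalence of $\p+\upgamma z\p'$) are accounted for.
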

\begin{lemma}[\cite{Roy}]\label{lem:roster}
The function 
\[
\q_{\uplambda}(z)\coloneqq\myp{1-z}^{\uplambda}\equiv \e^{\uplambda \log \myp{1-z}}=1-\uplambda z+\frac{\uplambda \myp{\uplambda-1}}{2}z^2-\frac{\uplambda\myp{\uplambda-1}\myp{\uplambda-2}}{6}z^3+\cdots
\]
for some  $\uplambda\in \mathbb{C}^{\ast}\coloneqq \mathbb{C}\setminus\set{0}, z\in \mathbb{U}$ is univalent in $\mathbb{U}$ if and only if $\uplambda$ is either in the closed disk $\abs{\uplambda+1}\leq 1$ or $\abs{\uplambda-1}\leq 1$.
\end{lemma}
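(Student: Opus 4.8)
The plan is to reduce the univalence of $\q_{\uplambda}$ to an injectivity statement for the exponential on an explicit region, and then to compute a difference set. Since $\q_{\uplambda}(z)=\e^{\uplambda\log(1-z)}$ with the principal branch, and $z\mapsto 1-z$ maps $\mathbb{U}$ bijectively onto the disk $\abs{u-1}<1$, which lies in the right half-plane, the principal logarithm is univalent there. Hence $w=\log\myp{1-z}$ is a conformal bijection of $\mathbb{U}$ onto the region
\[
\Omega\coloneqq\set{s+\i t\colon \abs{t}<\tfrac{\uppi}{2},\ \ s<\log\myp{2\cos t}},
\]
its shape being read off from $\abs{\e^{w}-1}^2=\e^{2s}-2\e^{s}\cos t+1$, which for $w=s+\i t$ in the principal strip $\abs{t}<\tfrac{\uppi}{2}$ is less than $1$ exactly when $\e^{s}<2\cos t$. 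In particular $\Omega$ is contained in the strip $\abs{\mathsf{Im}\,w}<\tfrac{\uppi}{2}$, and $\q_{\uplambda}$ is univalent in $\mathbb{U}$ if and only if $w\mapsto\e^{\uplambda w}$ is injective on $\Omega$.

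Next, $w\mapsto\e^{\uplambda w}$ fails to be injective on $\Omega$ precisely when there are distinct $w_1,w_2\in\Omega$ with $\uplambda(w_1-w_2)\in 2\uppi\i\,\mathbb{Z}\setminus\set{0}$, i.e.\ $w_1-w_2=2\uppi\i k/\uplambda$ for some nonzero integer $k$. So the decisive object is the difference set $\Omega-\Omega\coloneqq\set{w_1-w_2\colon w_1,w_2\in\Omega}$, and I claim it equals the open strip $\set{w\colon \abs{\mathsf{Im}\,w}<\uppi}$. The inclusion into the strip is immediate from $\abs{\mathsf{Im}\,w}<\tfrac{\uppi}{2}$ on $\Omega$. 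For the reverse inclusion, given a target $\sigma+\i\tau$ with $\abs{\tau}<\uppi$, I would set $t_1=\tau/2$ and $t_2=-\tau/2$, both in $(-\tfrac{\uppi}{2},\tfrac{\uppi}{2})$, and then pick $s_2$ negative enough that both $s_2$ and $s_1=s_2+\sigma$ fall below the respective bounds $\log(2\cos t_j)$; since $s<\log(2\cos t)$ constrains the real part only from above, this is always possible, and $w_1=s_1+\i t_1$, $w_2=s_2+\i t_2$ then realise the difference $\sigma+\i\tau$.

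Finally I would assemble the pieces. By the above, $\q_{\uplambda}$ is univalent exactly when $2\uppi\i k/\uplambda\notin\Omega-\Omega$ for every nonzero integer $k$, that is when $\abs{\mathsf{Im}(2\uppi\i k/\uplambda)}=2\uppi\abs{k}\,\abs{\RE(1/\uplambda)}\ge\uppi$ for all such $k$. As the left-hand side grows with $\abs{k}$, the binding case is $\abs{k}=1$, reducing the requirement to the single inequality $\abs{\RE(1/\uplambda)}\ge\tfrac12$. Writing $\RE(1/\uplambda)=\RE\uplambda/\abs{\uplambda}^2$, the branch $\RE(1/\uplambda)\ge\tfrac12$ rearranges to $\abs{\uplambda}^2-2\RE\uplambda+1\le1$, i.e.\ $\abs{\uplambda-1}\le1$, while $\RE(1/\uplambda)\le-\tfrac12$ rearranges to $\abs{\uplambda+1}\le1$; together these are exactly the two closed disks in the statement.

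The step I expect to be the main obstacle is identifying $\Omega-\Omega$ as the strip of width $2\uppi$, since this is where the specific radius-one disks — equivalently the threshold $\tfrac12$ — originate. I would also track the open/closed subtlety at $\abs{\RE(1/\uplambda)}=\tfrac12$ (the boundaries of the two disks): there $\mathsf{Im}(2\uppi\i/\uplambda)=\pm\uppi$ lies outside the \emph{open} strip $\Omega-\Omega$, so $\q_{\uplambda}$ remains univalent, in agreement with the disks being closed.
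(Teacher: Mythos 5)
The paper offers no proof of this lemma; it is imported from Royster \cite{Roy} as a quoted result, so there is no in-paper argument to compare yours against. Your proof is correct and self-contained. The chain of reductions is sound: $z\mapsto\log\myp{1-z}$ is a conformal bijection of $\mathbb{U}$ onto the region $\Omega$ you describe (the computation $\abs{\e^{w}-1}<1\Leftrightarrow \e^{s}<2\cos t$ on the principal strip is right), so univalence of $\q_{\uplambda}$ is equivalent to injectivity of $w\mapsto\e^{\uplambda w}$ on $\Omega$, which fails exactly when some $2\uppi\i k/\uplambda$ with $k\neq0$ lies in the difference set $\Omega-\Omega$. Identifying $\Omega-\Omega$ with the open strip $\abs{\mathsf{Im}\,w}<\uppi$ is the real content, and your construction ($t_{1}=-t_{2}=\tau/2$, then push both real parts far to the left, which is permitted because $\Omega$ is constrained only from the right) establishes the nontrivial inclusion; the other inclusion is the triangle inequality on imaginary parts. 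The endgame is a correct computation: excluding $2\uppi\i k/\uplambda$ from the strip for all $k\neq 0$ reduces, via the binding case $\abs{k}=1$, to $\abs{\RE\myp{1/\uplambda}}\geq\tfrac12$, and the two branches of that inequality rearrange precisely to the closed disks $\abs{\uplambda-1}\leq1$ and $\abs{\uplambda+1}\leq1$. You are also right about the boundary bookkeeping: since the strip is open, equality $\abs{\RE\myp{1/\uplambda}}=\tfrac12$ still gives univalence, which is why the disks are closed. This is essentially Royster's own mechanism (non-injectivity of the power map arising from the $2\uppi\i$-periodicity of $\exp$ measured against the logarithmic image of $\abs{u-1}<1$), organized cleanly around the difference set.
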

\begin{lemma}\label{lemma2}
For the univalent functions
\begin{enumerate}[label=\textbf{(UF.\arabic*)}]
\item\label{UF1}
$\q(z)=\myp{1+Bz}^\uplambda$ with
\[
 -1\leq B\leq 1;\, B\neq0\quad \text{and}\quad\uplambda\in \mathbb{C}^{\ast} \quad \text{with}\quad \abs{\uplambda+1}\leq 1\quad \text{or}\quad \abs{\uplambda-1}\leq 1,
\]
\item\label{UF2}
 and
\[
\q(z)=\frac{1+Az}{1+Bz};\quad \myp{-1\leq B<A\leq 1, \, z\in \mathbb{U}},
\]
\end{enumerate}
we have
\begin{equation}\label{eqlemma21}
\rs{1+\frac{z\q''(z)}{\q'(z)}-\frac{z\q'(z)}{\q(z)}}>0; \qquad \myp{z\in \mathbb{U}}.
\end{equation}
\end{lemma}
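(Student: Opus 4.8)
The plan is to reduce each case to the positivity of the real part of one explicit Möbius (or rational) expression, obtained by differentiating $\q$ and simplifying the logarithmic combination on the left of \eqref{eqlemma21}. It is worth noting at the outset that, writing $\Phi(z)\coloneqq z\q'(z)/\q(z)$, one has $1+\frac{z\q''(z)}{\q'(z)}-\frac{z\q'(z)}{\q(z)}=z\Phi'(z)/\Phi(z)$, so \eqref{eqlemma21} is exactly the statement that $\Phi$ (which vanishes to first order at $0$) is starlike; I will nonetheless verify it by direct calculation.

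For case \ref{UF1}, writing $\q(z)=\myp{1+Bz}^{\uplambda}$ and differentiating logarithmically gives $\frac{z\q'(z)}{\q(z)}=\frac{\uplambda Bz}{1+Bz}$ and $\frac{z\q''(z)}{\q'(z)}=\frac{\myp{\uplambda-1}Bz}{1+Bz}$. On subtracting, the factor $\uplambda$ cancels and the combination collapses to
\[
1+\frac{z\q''(z)}{\q'(z)}-\frac{z\q'(z)}{\q(z)}=1-\frac{Bz}{1+Bz}=\frac{1}{1+Bz}.
\]
Since $\abs{B}\leq 1$ and $\abs{z}<1$, the point $1+Bz$ satisfies $\abs{\myp{1+Bz}-1}<1$, so it lies in the open disc $\set{w\colon \abs{w-1}<1}$ and hence in the right half-plane; as $w\mapsto 1/w$ preserves the right half-plane, $\rs{1/\myp{1+Bz}}>0$. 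The restriction on $\uplambda$ plays no role here beyond securing univalence.

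For case \ref{UF2}, with $\q(z)=\frac{1+Az}{1+Bz}$ one finds $\q'(z)=\frac{A-B}{\myp{1+Bz}^2}$ and $\q''(z)=\frac{-2B\myp{A-B}}{\myp{1+Bz}^3}$, whence $\frac{z\q''(z)}{\q'(z)}=\frac{-2Bz}{1+Bz}$ and $\frac{z\q'(z)}{\q(z)}=\frac{\myp{A-B}z}{\myp{1+Az}\myp{1+Bz}}$. Putting everything over the common denominator $\myp{1+Az}\myp{1+Bz}$, the numerator collapses to $1-ABz^2$, and a partial-fraction split (both residues turn out to equal $1$) yields the clean form
\[
1+\frac{z\q''(z)}{\q'(z)}-\frac{z\q'(z)}{\q(z)}=-1+\frac{1}{1+Az}+\frac{1}{1+Bz}.
\]

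The crux is then a sharp pointwise estimate: for real $c$ with $0<\abs{c}\leq 1$ and every $z\in \mathbb{U}$ one has $\rs{1/\myp{1+cz}}>\frac{1}{1+\abs{c}}$ (and the left side is identically $1$ when $c=0$), which follows because the image of $\mathbb{U}$ under the Möbius map $z\mapsto 1/\myp{1+cz}$ is the open disc, or half-plane, whose leftmost point is $\frac{1}{1+\abs{c}}$. Applying this with $c=A$ and $c=B$, and using that $B<A$ forces at least one of $A,B$ to be nonzero, gives
\[
\rs{\frac{1}{1+Az}}+\rs{\frac{1}{1+Bz}}>\frac{1}{1+\abs{A}}+\frac{1}{1+\abs{B}}\geq 1,
\]
the final inequality being the elementary fact that $\frac{1}{1+a}+\frac{1}{1+b}\geq 1$ whenever $a,b\in[0,1]$ (it rearranges to $ab\leq 1$). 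Hence $\rs{-1+\frac{1}{1+Az}+\frac{1}{1+Bz}}>0$, which is \eqref{eqlemma21}. The main obstacle is not the differentiation, which is routine, but recognizing that the numerator simplifies to $1-ABz^2$ with unit partial-fraction residues, and supplying the sharp half-plane bound that pushes the two real parts past the threshold $1$.
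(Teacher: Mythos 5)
Your proof is correct. For \textbf{(UF.1)} you take essentially the same route as the paper: both reduce the left-hand side to $1/\myp{1+Bz}$ and conclude positivity of its real part (the paper quantifies this as $>1/\myp{1+|B|}$; you argue via preservation of the right half-plane under $w\mapsto 1/w$). For \textbf{(UF.2)} the two arguments agree up to the simplification
\[
1+\frac{z\q''(z)}{\q'(z)}-\frac{z\q'(z)}{\q(z)}=\frac{1-ABz^2}{\myp{1+Az}\myp{1+Bz}},
\]
and then genuinely diverge. The paper appeals to the minimum principle for the harmonic function $\rs{\p_{A,B}}$, evaluates at $z=\e^{\i\theta}$, and shows the boundary values are nonnegative by a case split on the sign of $A+B$, obtaining the explicit boundary expression \eqref{eqlemma22}. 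You instead use the partial-fraction identity $\p_{A,B}(z)=-1+\frac{1}{1+Az}+\frac{1}{1+Bz}$ together with the sharp interior estimate $\rs{1/\myp{1+cz}}>1/\myp{1+|c|}$ for $0<|c|\le 1$, closing with the elementary inequality $\frac{1}{1+a}+\frac{1}{1+b}\ge 1$ for $a,b\in[0,1]$. Your version works entirely inside $\mathbb{U}$, which is a real advantage: it sidesteps the paper's claim that $\p_{A,B}$ has no poles in $\overline{\mathbb{U}}$, which is in fact inaccurate in the endpoint cases $A=1$ or $B=-1$ (there the denominator vanishes on $\partial\mathbb{U}$ while the numerator does not, so the boundary argument needs a limiting justification the paper does not supply), and it delivers the strict inequality directly rather than passing from a boundary minimum of $0$ to positivity inside. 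Your handling of the degenerate case where one of $A$, $B$ vanishes --- noting that $B<A$ forces at least one of them to be nonzero so that at least one of the two bounds is strict --- is also correct.
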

\begin{proof}
\begin{enumerate}[label=\textbf{UF.\arabic*}]
\item
 From Lemma \ref{lem:roster}, the function $\q(z)=\myp{1+Bz}^\uplambda$ univalent in  $\myp{z\in 
 \mathbb{U}}$. A simple calculations shows that
 \[
 \rs{1+\frac{z\q''(z)}{\q'(z)}-\frac{z\q'(z)}{\q(z)}}=\rs{\frac{1}{1+Bz}}>\frac{1}{1+|B|}>0.
 \]
\item
Let $\q(z)=\myp{1+Az}/\myp{1+Bz}$; $\myp{-1\leq B<A\leq 1, \: z\in \mathbb{U}}$, then we have
\begin{equation*}
\rs{1+\frac{z\q''(z)}{\q'(z)}-\frac{z\q'(z)}{\q(z)}}=\rs{\frac{1-ABz^2}{\myp{1+Az}\myp{1+Bz}}}.
\end{equation*}
 The function 
 \[
 \p_{A,B}(z)=\frac{1-ABz^2}{\myp{1+Az}\myp{1+Bz}}; \qquad \myp{-1\leq B<A\leq 1},
 \] 
 dose not have any poles in $\overline{\mathbb{U}}$ and is analytic in $\mathbb{U}$. Then
 \[ 
 \Min\set{\rs{\p_{A,B}(z)}\colon\: |z|<1},
 \]
   attains its minimum value on the boundary $\set{z\in \mathbb{C}\colon |z|=1}$. If take $z=\e^{\i \theta}$ with $\theta\in (-\uppi,\uppi]$, then
 \begin{equation}\label{eqlemma22}
\rs{\frac{1-AB 
\e^{2\i\theta}}{\myp{1+A\e^{\i\theta}}\myp{1+B\e^{\i\theta}}}}=
\frac{\myp{1-AB}\myb{1+AB+\myp{A+B}\cos\theta}}{\abs{1+A\e^{\i\theta}}^2\abs{1+B\e^{\i\theta}}^2}.
 \end{equation}
 If $A+B\geq 0$, it follows that $1+AB+\myp{A+B}\cos\theta \geq \myp{1-A}\myp{1-B} \geq 0$,  and if
$A+B\leq 0$, it follows that $1+AB+\myp{A+B}\cos\theta \geq \myp{1+A}\myp{1+B}\geq 0$. Therefore, the minimum value of expression \eqref{eqlemma22} is  equal to $0$. \qedhere
   \end{enumerate}
\end{proof}
\begin{lemma}[\cite{BT}]\label{lemmaBT}
Let $\q$ be  function in $\mathbb{U}$ with $\q(0)\neq 0$. If $\q$ satisfy the condition \eqref{eqlemma21}, then for all $z\in \mathbb{U}$, $\q(z)\neq 0$.
\end{lemma}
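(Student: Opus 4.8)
The plan is to recast the hypothesis as a starlikeness-type condition and then read off the behaviour at a putative zero of $\q$. Logarithmic differentiation of $F(z)\coloneqq z\q'(z)/\q(z)$ gives $zF'(z)/F(z)=1+z\q''(z)/\q'(z)-z\q'(z)/\q(z)$, so that \eqref{eqlemma21} says precisely $\rs{zF'(z)/F(z)}>0$ at every point of $\mathbb{U}$ where the quotient is defined. I would then argue by contradiction: assume $\q$ is analytic in $\mathbb{U}$, satisfies \eqref{eqlemma21}, and nonetheless vanishes at some $z_0\in\mathbb{U}$. Because $\q(0)\neq0$ we have $z_0\neq0$, and since $\q\not\equiv0$ its zeros are isolated, so I may write $\q(z)=\myp{z-z_0}^m g(z)$ near $z_0$, with $m\geq1$ and $g$ analytic, $g(z_0)\neq0$.

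The core of the argument is to show that this forces $F$ to have a simple pole at $z_0$. Writing $\q'(z)=\myp{z-z_0}^{m-1}\myb{m g(z)+\myp{z-z_0}g'(z)}$, the cofactor equals $m g(z_0)\neq0$ at $z_0$, so
\[
F(z)=\frac{z\q'(z)}{\q(z)}=\frac{z\myb{m g(z)+\myp{z-z_0}g'(z)}}{\myp{z-z_0}g(z)}
\]
has a simple pole at $z_0$ with residue $m z_0\neq0$, irrespective of $m$. The logarithmic derivative $F'/F$ of a function with a simple pole has residue $-1$ there, whence
\[
1+\frac{z\q''(z)}{\q'(z)}-\frac{z\q'(z)}{\q(z)}=\frac{zF'(z)}{F(z)}=-\frac{z_0}{z-z_0}+O(1)\qquad(z\to z_0),
\]
the surviving residue $-z_0$ being independent of the multiplicity $m$.

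Finally I would extract the real part. With $z-z_0=r\e^{\i\phi}$ one has $\rs{-z_0/\myp{z-z_0}}=-\myp{\abs{z_0}/r}\cos\myp{\arg z_0-\phi}$, which diverges to $-\infty$ as $r\to0^{+}$ along, say, the direction $\phi=\arg z_0$. As $z_0$ is interior to $\mathbb{U}$, the points $z=z_0+r\e^{\i\phi}$ lie in $\mathbb{U}$ for all small $r$, and the bounded remainder cannot compensate for the divergence. Hence $\rs{zF'(z)/F(z)}<0$ at points of $\mathbb{U}$ arbitrarily close to $z_0$, contradicting \eqref{eqlemma21}. Therefore $\q$ cannot vanish in $\mathbb{U}$.

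The step I expect to be most delicate is the pole bookkeeping in the middle paragraph: one must verify that, regardless of the multiplicity $m$ of the zero of $\q$ (and of the attendant zero of $\q'$), the quotient $F=z\q'/\q$ acquires exactly a simple pole with nonvanishing residue, so that $zF'/F$ contributes the fixed residue $-z_0\neq0$ that drives the sign change. A secondary point to record is that on a small punctured disc about $z_0$ inside $\mathbb{U}$ both $\q$ and $\q'$ are zero-free, so that the expression in \eqref{eqlemma21} is genuinely defined there and the hypothesis may legitimately be invoked.
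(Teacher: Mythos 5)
Your proof is correct and self-contained. Note, however, that the paper does not prove this lemma at all: it is imported from Bulboac\u{a} and Tuneski \cite{BT} and used as a black box (in the proof of Theorem \ref{th:a}, to guarantee that $Q(z)=\upgamma z\q'(z)/\q(z)$ is analytic in $\mathbb{U}$), so there is no internal argument to measure yours against. What you give is the standard route to statements of this type, and the delicate points are all handled: a zero of $\q$ of multiplicity $m$ at $z_0$ makes $F(z)=z\q'(z)/\q(z)$ acquire a simple pole with residue $mz_0$, which is nonzero precisely because the hypothesis $\q(0)\neq 0$ forces $z_0\neq 0$; consequently $zF'(z)/F(z)=1+z\q''(z)/\q'(z)-z\q'(z)/\q(z)$ behaves like $-z_0/(z-z_0)$ near $z_0$, and along the ray $\phi=\arg z_0$ its real part tends to $-\infty$, contradicting \eqref{eqlemma21}. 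Your closing caveat is also the right one and worth keeping: since $\q$ is non-constant under the contradiction hypothesis (it vanishes at $z_0$ but not at $0$), the zeros of $\q$ and of $\q'$ are isolated, so the quantity in \eqref{eqlemma21} is genuinely defined on a punctured neighbourhood of $z_0$ and the hypothesis may legitimately be invoked there --- which is the only sensible reading of the lemma, as the inequality cannot be demanded at points where the expression is undefined.
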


\begin{lemma}\label{lemma4}
For the  function $\q(z)=\myp{1+Az}/\myp{1+Bz}$; $-1\leq B<A\leq 1, \, z\in \mathbb{U}$  the condition
\begin{equation}\label{eqlemma41}
\rs{1+\frac{z\q''(z)}{\q'(z)}} > \Max \set{0, -\RE\left(\upzeta\right)}; \qquad \myp{z\in \mathbb{U},\upzeta \in \mathbb{C}},
\end{equation}
equivalent to
$
\rs{\upzeta}\geq \frac{|B|-1}{|B|+1}.
$
\end{lemma}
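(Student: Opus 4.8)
The plan is to reduce the left-hand side of \eqref{eqlemma41} to a single M\"obius expression and then minimise its real part over $\mathbb{U}$. First I would compute the derivatives of $\q$: since $\q(z)=\myp{1+Az}/\myp{1+Bz}$, a direct differentiation gives $\q'(z)=\myp{A-B}/\myp{1+Bz}^2$ and $\q''(z)=-2B\myp{A-B}/\myp{1+Bz}^3$, so that the second-derivative term collapses to
\[
1+\frac{z\q''(z)}{\q'(z)}=1-\frac{2Bz}{1+Bz}=\frac{1-Bz}{1+Bz}.
\]
Crucially the factor $A-B\neq0$ cancels, so the expression depends on $B$ alone; this is exactly what makes the final bound involve $\abs{B}$ and not $A$.

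Next I would take the real part. Writing $u\coloneqq Bz$ and using $\RE\myp{\bar u-u}=0$, one obtains the identity
\[
\rs{\frac{1-u}{1+u}}=\frac{1-\abs{u}^2}{\abs{1+u}^2},
\]
which is manifestly positive on $\mathbb{U}$. The heart of the argument is to determine
\[
m\coloneqq\inf_{z\in\mathbb{U}}\rs{1+\frac{z\q''(z)}{\q'(z)}}=\inf_{\abs{u}<\abs{B}}\frac{1-\abs{u}^2}{\abs{1+u}^2}.
\]
For fixed modulus $\abs{u}=r$ the numerator is constant, so the quotient is smallest when $\abs{1+u}$ is largest, i.e.\ when $u=r$; this yields the value $\myp{1-r^2}/\myp{1+r}^2=\myp{1-r}/\myp{1+r}$. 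Since $t\mapsto\myp{1-t}/\myp{1+t}$ is strictly decreasing on $[0,1]$ and $r$ ranges over $[0,\abs{B})$, I conclude that $m=\myp{1-\abs{B}}/\myp{1+\abs{B}}$, and because $\abs{u}<\abs{B}$ is strict, this infimum is approached but never attained; in fact $\rs{1+z\q''(z)/\q'(z)}>m$ for every $z\in\mathbb{U}$.

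Finally I would convert the universally-quantified strict inequality in \eqref{eqlemma41} into a single scalar condition. Set $M\coloneqq\Max\set{0,-\RE\myp{\upzeta}}$. Because the infimum $m$ is not attained, the requirement that $\rs{1+z\q''(z)/\q'(z)}>M$ hold for all $z\in\mathbb{U}$ is equivalent to $M\leq m$: if $M\leq m$ then $\rs{1+z\q''(z)/\q'(z)}>m\geq M$ trivially, while if $M>m$ the values of the real part dip below $M$ for $u$ near $\abs{B}$. Since $m=\myp{1-\abs{B}}/\myp{1+\abs{B}}\geq0$, the constraint $M\leq m$ reduces to $-\RE\myp{\upzeta}\leq m$, that is $\RE\myp{\upzeta}\geq -m=\myp{\abs{B}-1}/\myp{\abs{B}+1}$, which is precisely the asserted equivalent. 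The one point demanding care is this strict-versus-nonstrict bookkeeping: the strict inequality over the \emph{open} disk together with the non-attained infimum is exactly what turns the pointwise ``$>$'' into the closed condition $\rs{\upzeta}\geq\myp{\abs{B}-1}/\myp{\abs{B}+1}$.
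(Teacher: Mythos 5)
Your proposal is correct and follows essentially the same route as the paper: both reduce $1+z\q''(z)/\q'(z)$ to the M\"obius expression $(1-Bz)/(1+Bz)$ and identify $(1-|B|)/(1+|B|)$ as the unattained infimum of its real part on $\mathbb{U}$ --- the paper by describing the image disk of this map explicitly, you by directly minimising the equivalent quantity $(1-|u|^2)/|1+u|^2$ over $|u|<|B|$. Your bookkeeping of the strict-versus-nonstrict passage from the pointwise inequality to the closed condition on $\RE(\upzeta)$ is in fact more explicit than the paper's one-line conclusion.
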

\begin{proof}
The function $\upomega(z)=1+\frac{z\q''(z)}{\q'(z)}=\frac{1-Bz}{1+Bz}$; $\myp{-1\leq B<A\leq 1, \, B\neq 0}$, maps unit disk $\mathbb{U}$ onto the disk 
\begin{align*}
&\abs{\upomega(z)-\frac{1+B^2}{1-B^2}} <\frac{2\abs{B}}{1-B^2}; \qquad \myp{z\in \mathbb{U}},
\intertext{which implies that}
&\rs{\upomega(z)}>\frac{1-|B|}{1+|B|}; \quad \myp{z\in \mathbb{U}}.
\end{align*}
From \eqref{eqlemma41} we have
\[
\frac{1-|B|}{1+|B|}\geq \Max \set{0, -\RE\left(\upzeta\right)}
\]
and this is equivalent to  $\rs{\upzeta}\geq \myp{|B|-1}/\myp{|B|+1}$.
\end{proof}
\begin{lemma}\label{lemma3}
Let 
\[
\upomega(z)=\frac{u+vz}{1+Bz}; \qquad \myp{u,v\in \mathbb{C};\,\,\text{with}\,\, (u,v)\neq (0,0),\,\,-1<B<1,\, z\in \mathbb{U}}.
\] 
Suppose  that
$\rs{u-vB}\geq \abs{v-uB}$, then $\rs{\upomega(z)}>0$; $\myp{z\in \mathbb{U}}$.
\end{lemma}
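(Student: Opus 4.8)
The plan is to exploit the fact that $\upomega$ is a bilinear (Möbius) transformation and to determine explicitly the disk onto which it maps $\mathbb{U}$. First I would observe that, since $-1<B<1$, the only pole $z=-1/B$ of $\upomega$ (when $B\neq 0$) satisfies $\abs{-1/B}=1/\abs{B}>1$, so $\upomega$ is analytic on $\overline{\mathbb{U}}$ and, being a non-degenerate Möbius map, carries the open disk $\mathbb{U}$ onto an open disk $\Delta\coloneqq\upomega\myp{\mathbb{U}}$. The case $B=0$ is the affine map $\upomega(z)=u+vz$, whose image is the disk $\abs{w-u}<\abs{v}$, and can be handled directly.

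Next I would locate the center and radius of $\Delta$. Writing
\[
\upomega(z)=\frac{v}{B}+\frac{uB-v}{B}\cdot\frac{1}{1+Bz},
\]
the auxiliary map $z\mapsto 1/(1+Bz)$ sends $\overline{\mathbb{U}}$ onto the closed disk with real diameter $\myb{1/(1+\abs{B}),\,1/(1-\abs{B})}$, i.e.\ the disk centered at $1/(1-B^2)$ with radius $\abs{B}/(1-B^2)$; here one uses that $1+Bz$ ranges over $\abs{w-1}\le\abs{B}$, which excludes the origin, so inversion again produces a disk. Applying the affine transformation $w\mapsto v/B+\myp{(uB-v)/B}\,w$ then gives, after simplification,
\[
c\coloneqq\frac{u-vB}{1-B^2}, \qquad r\coloneqq\frac{\abs{v-uB}}{1-B^2}
\]
for the center and radius of $\Delta$.

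Finally, I would read off the positivity. Dividing the hypothesis $\rs{u-vB}\geq\abs{v-uB}$ by $1-B^2>0$ shows precisely that $\RE(c)\geq r$. Since $\Delta$ is the image of the \emph{open} disk $\mathbb{U}$, every $w\in\Delta$ satisfies $\abs{w-c}<r$, whence
\[
\RE(w)=\RE(c)+\RE(w-c)\geq\RE(c)-\abs{w-c}>\RE(c)-r\geq 0,
\]
and therefore $\rs{\upomega(z)}>0$ for all $z\in\mathbb{U}$.

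The main obstacle is the middle step: correctly computing $\Delta$, in particular tracking the inversion $z\mapsto 1/(1+Bz)$ and the ensuing affine map so that the center and radius come out as the clean expressions above, whose governing inequality $\RE(c)\ge r$ reproduces the hypothesis exactly. An alternative, more computational route would instead invoke the minimum principle for the harmonic function $\rs{\upomega}$ to reduce the claim to the boundary $\abs{z}=1$, write $\rs{\upomega\myp{\e^{\i\theta}}}$ as $\myp{C_0+C_1\cos\theta+C_2\sin\theta}/\abs{1+B\e^{\i\theta}}^2$ with $C_0=\RE(u)+B\,\RE(v)$, $C_1=B\,\RE(u)+\RE(v)$, $C_2=B\,\mathsf{Im}(u)-\mathsf{Im}(v)$, and minimize the numerator via $\min_{\theta}=C_0-\sqrt{C_1^2+C_2^2}$; the obstacle there is to verify that $C_0-\sqrt{C_1^2+C_2^2}\ge 0$ is implied by $\rs{u-vB}\ge\abs{v-uB}$, which requires squaring and matching both the sign and the magnitude conditions.
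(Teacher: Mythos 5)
Your proposal is correct and follows essentially the same route as the paper: the paper's proof likewise states that $\upomega$ maps $\mathbb{U}$ onto the disk $\abs{\upomega(z)-\frac{u-vB}{1-B^2}}<\frac{\abs{v-uB}}{1-B^2}$ and reads off $\rs{\upomega(z)}>\frac{\rs{u-vB}-\abs{v-uB}}{1-B^2}\geq 0$. You merely supply the computation of the center and radius (and the $B=0$ case) that the paper asserts without detail.
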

\begin{proof}
The function $\upomega(z)=\frac{u+vz}{1+Bz}$ maps $\mathbb{U}$ onto the disk
\begin{align*}
&\abs{\upomega(z)-\frac{u-vB}{1-B^2}} <\frac{\abs{v-uB}}{1-B^2}; \qquad \myp{z\in \mathbb{U}},
\intertext{which implies that}
&\rs{\upomega(z)}>\frac{\rs{u-vB}-\abs{v-uB}}{1-B^2}\geq 0; \quad \myp{z\in \mathbb{U}}.\qedhere
\end{align*}
\end{proof}

 Some interesting results of differential subordination and superordination 
 were obtained recently (for example) Bulboac\u{a} 
 \cite{Bul,bul2,BBS}, Shammugam et al. 
 \cite{SRMS}, Zayed et al. \cite{ZMA}, Ebadian and Sok{\'o}{\l} 
 \cite{ES} and Aouf et al. \cite{AMZ}. 
 
 In this paper, we will derive several 
 subordination, superordination and sandwich results involving 
 the operator $\mathscr{F}_{p}^{\upeta,\upmu}$.
 
\section{Subordination Results}\label{sec:sub}
For convenience, let
\begin{align*}
\mathcal{A}_0 &\coloneqq \biggl\{f\in \A \quad\colon\quad \mathscr{F}_{p}^{\upeta,\upmu}[f](z)\Bigr|_{z=0}=1,\,\,  \upeta,\upmu\in \mathbb{C};\, (\upeta,\upmu)\neq(0,0)\biggr\}.\\
\mathbf{B}&\coloneqq \set{z\in \mathbb{C}\quad\colon \quad\abs{z+1}\leq 1\quad \text{or}\quad \abs{z-1}\leq 1}.
\end{align*}
We assume in the remainder of this paper that
 $\upsigma$ be complex number, $\upgamma\in \mathbb{C}^{\ast}$, $\upalpha, \uplambda$ are real numbers with $0\leq \upalpha<1, \, -\frac{\uppi}{2}<\uplambda<\frac{\uppi}{2}$, respectively, and all the powers are principal ones.
\begin{theorem}\label{th:a}
Let $\q$ be univalent in $\mathbb{U}$ with $\q(0)=1$, and $\q$ satisfy the condition \eqref{eqlemma21}. If the function $f \in \mathcal{A}_0$ with $\mathscr{F}_{p}^{\upeta,\upmu}[f](z)\neq 0$; $\myp{z\in \mathbb{U}}$  satisfies the following subordination condition:
\begin{equation}
1+\upgamma \myb{\upeta\myp{1-p+\frac{zf''(z)}{f'(z)}}+\upmu\myp{p-\frac{zf'(z)}{f(z)}}} \prec 1+\upgamma \frac{z\q'(z)}{\q(z)}; \qquad \myp{z\in \mathbb{U}},\label{th:aaaa}
\end{equation}
then
\[
\mathscr{F}_{p}^{\upeta,\upmu}[f](z)\prec \q(z); \qquad \myp{z\in \mathbb{U}},
\]
and $\q$ is the best dominant of Eq.~\eqref{th:aaaa}.
\end{theorem}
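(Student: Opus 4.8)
The plan is to reduce the stated subordination to the standard form treated in Lemma~\ref{lem1} by setting $\p(z)\coloneqq\mathscr{F}_{p}^{\upeta,\upmu}[f](z)$, which is analytic in $\mathbb{U}$ with $\p(0)=1$ since $f\in\mathcal{A}_0$. First I would take a logarithmic derivative of the defining product \eqref{diff}. Writing $\log\p(z)=\upeta\log\myp{f'(z)/(pz^{p-1})}+\upmu\log\myp{z^p/f(z)}$ and differentiating gives
\[
\frac{z\p'(z)}{\p(z)}=\upeta\myp{1-p+\frac{zf''(z)}{f'(z)}}+\upmu\myp{p-\frac{zf'(z)}{f(z)}},
\]
so the left-hand side of \eqref{th:aaaa} is exactly $1+\upgamma\,z\p'(z)/\p(z)$, and \eqref{th:aaaa} reads $1+\upgamma\,z\p'(z)/\p(z)\prec 1+\upgamma\,z\q'(z)/\q(z)$.

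Next I would match this with \eqref{eq8} by choosing the constant $\uptheta(w)\equiv 1$ and $\upvarphi(w)\coloneqq\upgamma/w$. With these choices $Q(z)=z\q'(z)\upvarphi(\q(z))=\upgamma\,z\q'(z)/\q(z)$ and $h(z)=\uptheta(\q(z))+Q(z)=1+\upgamma\,z\q'(z)/\q(z)$, so both members have the prescribed form. To legitimize this, $\upvarphi$ must be analytic and nonvanishing on a domain $\Omega\supset\q(\mathbb{U})$, which forces $0\notin\q(\mathbb{U})$; this is precisely what Lemma~\ref{lemmaBT} delivers, since $\q(0)=1\neq 0$ and $\q$ satisfies \eqref{eqlemma21}. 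Taking $\Omega=\mathbb{C}^{\ast}$ then accommodates $\q(\mathbb{U})$, while the standing hypothesis $\mathscr{F}_{p}^{\upeta,\upmu}[f](z)\neq 0$ guarantees $\p(\mathbb{U})\subset\Omega$ as well; moreover $\p(0)=1=\q(0)$.

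The crux — which turns out to be short — is verifying the two admissibility conditions (i) and (ii) of Lemma~\ref{lem1}, both of which collapse onto the single hypothesis \eqref{eqlemma21}. Logarithmic differentiation of $Q$ yields
\[
\frac{zQ'(z)}{Q(z)}=1+\frac{z\q''(z)}{\q'(z)}-\frac{z\q'(z)}{\q(z)},
\]
so \eqref{eqlemma21} says exactly that $\rs{zQ'(z)/Q(z)}>0$; combined with $Q(0)=0$ and $Q'(0)=\upgamma\,\q'(0)\neq 0$ (using univalence of $\q$, so $\q'(0)\neq 0$), this shows $Q$ is starlike, giving~(i). Since $\uptheta'\equiv 0$, condition~(ii) reduces to $\rs{zh'(z)/Q(z)}=\rs{zQ'(z)/Q(z)}>0$, which is again \eqref{eqlemma21}.

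With every hypothesis checked, Lemma~\ref{lem1} applies and yields $\p(z)\prec\q(z)$, that is, $\mathscr{F}_{p}^{\upeta,\upmu}[f](z)\prec\q(z)$, with $\q$ the best dominant of \eqref{th:aaaa}. I expect no genuine analytic obstacle here: the entire difficulty is clerical, namely carrying out the logarithmic-derivative identity cleanly and recognizing that the one condition \eqref{eqlemma21} simultaneously supplies the nonvanishing of $\q$ (via Lemma~\ref{lemmaBT}), the starlikeness of $Q$, and the positivity required in~(ii).
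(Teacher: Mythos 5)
Your proposal is correct and follows essentially the same route as the paper: the choices $\uptheta(w)=1$, $\upvarphi(w)=\upgamma/w$, $\Omega=\mathbb{C}^{\ast}$, the use of Lemma~\ref{lemmaBT} to secure $\q(z)\neq 0$, and the reduction of both admissibility conditions of Lemma~\ref{lem1} to \eqref{eqlemma21} all match the paper's argument. You in fact spell out the logarithmic-derivative identity that makes \eqref{th:aaaa} coincide with \eqref{eq8}, a step the paper only asserts.
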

\begin{proof}
 If we choose $\uptheta(w)=1$ and $\upvarphi(w)=\frac{\upgamma}{w}$, then $\uptheta, \upvarphi \in \mathcal{H}(\Omega)$; $\myp{\Omega\coloneqq\mathbb{C}^{\ast}}$. The condition $\q(\mathbb{U})\subset \Omega$ from Lemma \ref{lem1} is equivalent to $\q(z)\neq 0$ for all $z\in \mathbb{U}$. For $w\in \q(\mathbb{U})$, we have $\upvarphi(w)\neq 0$. Define
 \begin{equation*}
 Q(z)\coloneqq z \q'(z)\upvarphi(\q(z))=\upgamma \frac{z\q'(z)}{\q(z)};\qquad \myp{z\in \mathbb{U}}.
 \end{equation*}
From Lemma \ref{lemmaBT}, $\q(z)\neq 0$ for all $z\in \mathbb{U}$, then $Q\in \mathcal{H}(\mathbb{U})$. Further, $\q$ is an univalent function, implies $\q'(z)\neq0$ for all $z \in\mathbb{U}$, $Q(0)=0$ and $Q'(0)=\upgamma \frac{\q'(0)}{\q(0)}\neq 0$, and 
 \begin{align*}
&\rs{\frac{zQ'(z)}{Q(z)}}=\rs{1+\frac{z\q''(z)}{\q'(z)}-\frac{z\q'(z)}{\q(z)}}>0; \qquad \myp{z\in \mathbb{U}},
\intertext{hence $Q$ is a starlike function in $\mathbb{U}$. Moreover, if}
 &h(z)\coloneqq\uptheta(\q(z))+Q(z)=1+\upgamma \frac{z\q'(z)}{\q(z)},
 \intertext{we also have}
& \rs{\frac{zh'(z)}{Q(z)}}=\rs{\frac{zQ'(z)}{Q(z)}}>0; \qquad \myp{z\in \mathbb{U}}.
 \end{align*}
For $f\in \mathcal{A}_0$, the function $\mathscr{F}_{p}^{\upeta,\upmu}[f](z)$ given by \eqref{diff}, we have $\mathscr{F}_{p}^{\upeta,\upmu}[f](\mathbb{U})\subset\Omega$ and the subordinations \eqref{eq8} and \eqref{th:aaaa} are equivalent, then  all the conditions of Lemma \ref{lem1} are satisfied and the function $\q$ is the best dominant of \eqref{th:aaaa}.
\end{proof}
Taking $\upeta=0$, $\upgamma=1$ and $\q(z)=\myp{1+Az}/\myp{1+Bz}$; $\myp{-1\leq A<B\leq 1, \: z\in \mathbb{U}}$ in Theorem \ref{th:a} and applying item \ref{UF2}, we get the following result:
\begin{corol}
Let $-1\leq A<B\leq 1$, $\upmu\neq 0$ and $f\in \A$ satisfy the conditions
\begin{align}
& \myb{\frac{z^p}{f(z)}}^{\upmu}\Biggr|_{z=0}=1  \quad \textrm{and}\quad  \frac{z^p}{f(z)}\neq 0; \qquad \myp{z\in \mathbb{U}}. \notag
\intertext{If the function $f$  satisfies the following subordination condition:}
&1+\upmu\myp{p-\frac{zf'(z)}{f(z)}} \prec 1+ \frac{\myp{A-B}z}{\myp{1+Az}\myp{1+Bz}}; \qquad \myp{z\in \mathbb{U}},\label{coro:th1:1}
\end{align}
then
\begin{equation*}
\myp{\frac{z^p}{f(z)}}^{\upmu}\prec \frac{1+Az}{1+Bz}; \qquad \myp{z\in \mathbb{U}},
\end{equation*}
and $\myp{1+Az}/\myp{1+Bz}$ is the best dominant of Eq.~\eqref{coro:th1:1}
\end{corol}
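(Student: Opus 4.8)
The plan is to apply Theorem~\ref{th:a} directly with the specialization $\upeta=0$, $\upgamma=1$, and $\q(z)=\myp{1+Az}/\myp{1+Bz}$, and then to check that the specialized statement is precisely the asserted corollary. First I would identify the operator: in the definition~\eqref{diff}, setting $\upeta=0$ turns the factor $\myb{f'(z)/\myp{pz^{p-1}}}^{\upeta}$ into $1$, so that $\mathscr{F}_{p}^{0,\upmu}[f](z)=\myb{z^p/f(z)}^{\upmu}$, which is exactly the operator appearing in the conclusion. Hence the hypothesis $\mathscr{F}_{p}^{0,\upmu}[f](z)\neq 0$ becomes $\myb{z^p/f(z)}^{\upmu}\neq 0$, equivalently $z^p/f(z)\neq 0$, and the requirement $f\in\mathcal{A}_0$ is exactly the normalization $\myb{z^p/f(z)}^{\upmu}\bigr|_{z=0}=1$. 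On the left-hand side of~\eqref{th:aaaa}, with $\upeta=0$ and $\upgamma=1$ the $\upeta$-term drops out, leaving $1+\upmu\myp{p-zf'(z)/f(z)}$, which matches~\eqref{coro:th1:1}.

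Next I would verify the two structural hypotheses that Theorem~\ref{th:a} imposes on $\q$. The value $\q(0)=1$ is immediate. For univalence, $\q$ is a M\"obius transformation whose only finite pole, occurring when $B\neq 0$, lies at $z=-1/B$ with $\abs{-1/B}=1/\abs{B}\geq 1$, hence outside $\mathbb{U}$; thus $\q$ is injective on $\mathbb{U}$, while for $B=0$ it is linear and therefore trivially univalent. A short computation gives $\q'(z)=\myp{A-B}/\myp{1+Bz}^2$, whence $z\q'(z)/\q(z)=\myp{A-B}z/\myb{\myp{1+Az}\myp{1+Bz}}$; consequently the right-hand side $1+\upgamma\,z\q'(z)/\q(z)$ of~\eqref{th:aaaa} becomes $1+\myp{A-B}z/\myb{\myp{1+Az}\myp{1+Bz}}$, which is exactly the dominant in~\eqref{coro:th1:1}.

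It then remains to confirm that $\q$ obeys condition~\eqref{eqlemma21}, for which I would cite item~\ref{UF2} of Lemma~\ref{lemma2}. The only subtlety is that item~\ref{UF2} is phrased for $-1\leq B<A\leq 1$, whereas the corollary presumes $-1\leq A<B\leq 1$, so that the roles of $A$ and $B$ are interchanged. This is harmless: the quantity governing~\eqref{eqlemma21}, namely $\myp{1-ABz^2}/\myb{\myp{1+Az}\myp{1+Bz}}$, is symmetric under the interchange $A\leftrightarrow B$, so the nonnegativity of its real part established in the proof of Lemma~\ref{lemma2} carries over verbatim to the case $A<B$. With every hypothesis of Theorem~\ref{th:a} now in force, its conclusion delivers $\myp{z^p/f(z)}^{\upmu}\prec\myp{1+Az}/\myp{1+Bz}$ together with the best-dominant assertion. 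The whole argument is direct substitution into Theorem~\ref{th:a}; the only, entirely minor, obstacle is the bookkeeping around the $A,B$ ordering just described.
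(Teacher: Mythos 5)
Your proof is correct and follows exactly the paper's route: the corollary is obtained by substituting $\upeta=0$, $\upgamma=1$, $\q(z)=\myp{1+Az}/\myp{1+Bz}$ into Theorem~\ref{th:a} and invoking item~\ref{UF2} of Lemma~\ref{lemma2} to verify condition~\eqref{eqlemma21}. Your explicit observation that $\myp{1-ABz^2}/\myb{\myp{1+Az}\myp{1+Bz}}$ is symmetric in $A$ and $B$, so that item~\ref{UF2} still applies despite the corollary's reversed ordering $-1\leq A<B\leq 1$ (almost certainly a typo for $-1\leq B<A\leq 1$), is a point the paper passes over in silence.
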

Taking $\upmu=0$, $\upgamma=1$  and $\q(z)=\myp{1+Az}/\myp{1+Bz}$; $\myp{-1\leq A<B\leq 1, \: z\in \mathbb{U}}$ in Theorem \ref{th:a} and applying item \ref{UF2}, we get the following result:
\begin{corol}
Let $-1\leq A<B\leq 1$, $\upeta\neq 0$ and $f\in \A$ satisfy the conditions
\begin{align}
& \myb{\frac{f'(z)}{pz^{p-1}}}^{\upeta}\Biggr|_{z=0}=1  \quad \textrm{and}\quad  \frac{f'(z)}{pz^{p-1}}\neq 0; \qquad \myp{z\in \mathbb{U}}. \notag
\intertext{If the function $f$  satisfies the following subordination condition:}
&1+\upeta\myb{1-p+\frac{zf''(z)}{f'(z)}}  \prec 1+ \frac{\myp{A-B}z}{\myp{1+Az}\myp{1+Bz}}; \qquad \myp{z\in \mathbb{U}},\label{coro:th1:2}
\end{align}
then
\[
\myb{\frac{f'(z)}{pz^{p-1}}}^{\upeta}\prec \frac{1+Az}{1+Bz}; \qquad \myp{z\in \mathbb{U}},\notag
\]
and $\myp{1+Az}/\myp{1+Bz}$ is the best dominant of \eqref{coro:th1:2}
\end{corol}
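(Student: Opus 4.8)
The plan is to read off this statement as the specialization of Theorem~\ref{th:a} obtained by putting $\upmu=0$, $\upgamma=1$ and $\q(z)=(1+Az)/(1+Bz)$; all that is needed is to check that these data meet the hypotheses of that theorem and that the operator degenerates as claimed. First I would record the effect of $\upmu=0$: in the definition \eqref{diff} the factor $\myb{z^p/f(z)}^{\upmu}$ becomes $1$ (principal value of a zeroth power), so
\[
\mathscr{F}_{p}^{\upeta,0}[f](z)=\myb{\frac{f'(z)}{pz^{p-1}}}^{\upeta}.
\]
Consequently the requirement $(\upeta,\upmu)\neq(0,0)$ defining $\mathcal{A}_0$ reduces to $\upeta\neq0$, and the two conditions $f\in\mathcal{A}_0$, $\mathscr{F}_{p}^{\upeta,0}[f](z)\neq0$ are exactly the normalization and non-vanishing conditions imposed on $\myb{f'(z)/(pz^{p-1})}^{\upeta}$ in the statement. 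Setting further $\upgamma=1$, the $\upmu$-term on the left of \eqref{th:aaaa} drops out and leaves $1+\upeta\myb{1-p+zf''(z)/f'(z)}$, matching the left-hand side of \eqref{coro:th1:2}.

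Next I would verify the hypotheses on $\q$. Since $A\neq B$, the map $\q(z)=(1+Az)/(1+Bz)$ is bilinear, hence univalent on $\mathbb{U}$, and $\q(0)=1$. The structural condition \eqref{eqlemma21} is precisely item \ref{UF2} of Lemma~\ref{lemma2}, which I would invoke directly. To match the right-hand side of the subordination I would compute, by logarithmic differentiation,
\[
\frac{z\q'(z)}{\q(z)}=z\myp{\frac{A}{1+Az}-\frac{B}{1+Bz}}=\frac{(A-B)z}{(1+Az)(1+Bz)},
\]
so with $\upgamma=1$ the dominant side $1+\upgamma\,z\q'(z)/\q(z)$ equals $1+(A-B)z/\myb{(1+Az)(1+Bz)}$, which is the right-hand member of \eqref{coro:th1:2}. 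With every hypothesis of Theorem~\ref{th:a} now verified for this choice of $\q,\upmu,\upgamma$, the theorem delivers $\myb{f'(z)/(pz^{p-1})}^{\upeta}\prec(1+Az)/(1+Bz)$ and the best-dominant assertion.

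Since the statement is a pure corollary, I do not expect a genuine obstacle; the one point deserving a moment's attention is that Lemma~\ref{lemma2}\,\ref{UF2} is stated for $-1\leq B<A\leq1$, whereas here the ordering $-1\leq A<B\leq1$ is used. This is harmless: the proof of \ref{UF2} uses only $-1\leq A,B\leq1$ with $A\neq B$ (so that $1-AB>0$ and $1+AB+(A+B)\cos\theta\geq0$ on $\abs{z}=1$), and therefore establishes \eqref{eqlemma21} under either ordering.
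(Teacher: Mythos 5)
Your proposal is correct and follows exactly the paper's own route: the corollary is obtained by specializing Theorem~\ref{th:a} with $\upmu=0$, $\upgamma=1$, $\q(z)=(1+Az)/(1+Bz)$ and invoking item \ref{UF2} of Lemma~\ref{lemma2}, which is precisely what the paper does in one sentence. Your closing observation about the ordering ($-1\leq A<B\leq 1$ in the corollary versus $-1\leq B<A\leq 1$ in Lemma~\ref{lemma2}) correctly identifies and resolves a small inconsistency in the paper's statement, since the verification of \eqref{eqlemma21} indeed only uses $A,B\in[-1,1]$ with $A\neq B$.
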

Taking  $\upgamma=\frac{\e^{\i \uplambda}}{pab\cos\uplambda}$, $\upmu=-a$, $\upeta=0$ and  $\q(z)=\myp{1-z}^{-2pab\myp{1-\upalpha}\e^{-\i \uplambda}\cos \uplambda}$ in Theorem \ref{th:a} and combining this together with item  \ref{UF1}, we obtain the following result:
\begin{corol}\label{corol:th1:3}
Let $f\in \mathcal{S}^{\uplambda}_p(\upalpha,b)$. Then
\begin{align}
\myb{\frac{f(z)}{z^p}}^{a} &\prec \frac{1}{\myp{1-z}^{2pab\myp{1-\upalpha}\e^{-\i \uplambda}\cos \uplambda}}; \qquad \myp{a\in \mathbb{C}^{\ast},\:z\in \mathbb{U}}.\notag
\intertext{or, equivalently}
1+\frac{\e^{\i \uplambda}}{b\cos\uplambda}\myb{\frac{zf'(z)}{pf(z)}-1} &\prec \frac{1+\myp{1-2\upalpha}z}{1-z} \Longrightarrow \myb{\frac{f(z)}{z^p}}^{a}\prec \frac{1}{\myp{1-z}^{2pab\myp{1-\upalpha}\e^{-\i \uplambda}\cos \uplambda}}.\notag
\end{align}
where  $2pab\myp{1-\upalpha}\e^{-\i \uplambda}\cos \uplambda \in 
\mathbf{B}$  and $\q(z)=\myp{1-z}^{-2p ab\myp{1-\upalpha}\e^{-\i 
\uplambda}\cos \uplambda}$ is the best dominant.
\end{corol}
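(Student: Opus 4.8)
The plan is to specialize Theorem \ref{th:a} to the stated parameters and then to recognize that the resulting subordination hypothesis \eqref{th:aaaa} is nothing other than the defining condition of the class $\mathcal{S}^{\uplambda}_p(\upalpha,b)$. First I would substitute $\upeta=0$ and $\upmu=-a$ into the operator \eqref{diff}, obtaining $\mathscr{F}_p^{0,-a}[f](z)=\myb{z^p/f(z)}^{-a}=\myb{f(z)/z^p}^{a}$, which is precisely the quantity on the left of the asserted subordination. The normalization $\mathscr{F}_p^{0,-a}[f](0)=1$, together with $(\upeta,\upmu)=(0,-a)\neq(0,0)$ (as $a\in\mathbb{C}^{\ast}$) and the non-vanishing of $\myb{f(z)/z^p}^{a}$, places $f$ in $\mathcal{A}_0$ and supplies the hypothesis $\mathscr{F}_p^{\upeta,\upmu}[f]\neq 0$ demanded by Theorem \ref{th:a}.

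Next I would reduce both sides of \eqref{th:aaaa}. With $\upgamma=\e^{\i\uplambda}/(pab\cos\uplambda)$ the left side collapses (the $\upeta$-term drops out) to $P(z):=1+\frac{\e^{\i\uplambda}}{b\cos\uplambda}\myb{\frac{zf'(z)}{pf(z)}-1}$, using $\upgamma\cdot(-a)=-\e^{\i\uplambda}/(pb\cos\uplambda)$. For the right side I would take $\q(z)=\myp{1-z}^{\kappa}$ with exponent $\kappa:=-2pab\myp{1-\upalpha}\e^{-\i\uplambda}\cos\uplambda$, compute the logarithmic derivative $z\q'(z)/\q(z)=-\kappa z/\myp{1-z}$, and feed it into $1+\upgamma z\q'(z)/\q(z)$; here the product $\upgamma\kappa$ reduces to $-2(1-\upalpha)$, giving $1+2(1-\upalpha)z/(1-z)=\myp{1+(1-2\upalpha)z}/\myp{1-z}$. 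Thus \eqref{th:aaaa} becomes exactly $P(z)\prec\myp{1+(1-2\upalpha)z}/(1-z)$.

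The decisive step is to identify this last subordination with membership in $\mathcal{S}^{\uplambda}_p(\upalpha,b)$. Using $\cos\uplambda+\i\sin\uplambda=\e^{\i\uplambda}$ I would rewrite $(1-b)\cos\uplambda+\i\sin\uplambda=\e^{\i\uplambda}-b\cos\uplambda$, which turns the bracketed expression in the definition of $\mathcal{S}^{\uplambda}_p(\upalpha,b)$ into precisely $P(z)$; hence $f\in\mathcal{S}^{\uplambda}_p(\upalpha,b)$ is the same as $\rs{P(z)}>\upalpha$ on $\mathbb{U}$. Since $P(0)=1$ and the half-plane $\set{w\colon \rs{w}>\upalpha}$ is convex, while $\myp{1+(1-2\upalpha)z}/(1-z)$ maps $\mathbb{U}$ univalently onto that half-plane with value $1=P(0)$ at the origin, the equivalence $\rs{P}>\upalpha\Longleftrightarrow P\prec\myp{1+(1-2\upalpha)z}/(1-z)$ follows from the univalent-dominant criterion recalled in the Introduction. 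This absorption of the complex order $b$ into the single analytic function $P$ is where care is needed, and I expect it to be the main subtlety of the argument.

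Finally I would check the remaining hypotheses on $\q$ and invoke the theorem. Taking $B=-1$ and exponent $\kappa$ in item \ref{UF1}, both the univalence of $\q$ and the condition \eqref{eqlemma21} hold provided $\kappa\in\mathbf{B}$; since $\mathbf{B}$ is invariant under $z\mapsto -z$, this is exactly the stated requirement $2pab\myp{1-\upalpha}\e^{-\i\uplambda}\cos\uplambda\in\mathbf{B}$, and $\q(0)=1$ is immediate. With every hypothesis of Theorem \ref{th:a} verified, the conclusion $\myb{f(z)/z^p}^{a}\prec\q(z)$ together with the best-dominant property follows at once, and the equivalent implication displayed in the statement is just the reformulation of \eqref{th:aaaa} obtained in the second paragraph.
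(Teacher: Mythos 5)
Your proposal is correct and follows exactly the paper's route: the paper derives this corollary by the one-line specialization $\upeta=0$, $\upmu=-a$, $\upgamma=\e^{\i\uplambda}/(pab\cos\uplambda)$, $\q(z)=(1-z)^{-2pab(1-\upalpha)\e^{-\i\uplambda}\cos\uplambda}$ in Theorem \ref{th:a} combined with item \ref{UF1}, and your computations (collapse of the left side to $P(z)$, the identity $\upgamma\kappa=-2(1-\upalpha)$, and the identification of $\rs{P}>\upalpha$ with the stated subordination) are precisely the details the paper leaves implicit. No discrepancies to report.
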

For example, for $a=\frac12$ and  $p=b=1$ we get
\[
f\in \mathcal{S}^{\uplambda}(\upalpha)\Longrightarrow \sqrt{\frac{f(z)}{z}} \prec \frac{1}{\myp{1-z}^{\myp{1-\upalpha}\e^{-\i \uplambda}\cos \uplambda}}; \qquad \myp{z\in \mathbb{U}}.
\]
\begin{remark}
A special case of Corollary \ref{corol:th1:3} when $p=1$, $\upalpha=0$ 
and $f \in \mathcal{A}$ was given  by Aouf et al. {\cite[Theorem 
1]{AAH}}.
\end{remark}
Taking  $\upgamma=\frac{\e^{\i \uplambda}}{pab\cos\uplambda}$, $\upmu=0$, $\upeta=a$ and  $\q(z)=\myp{1-z}^{-2pab\myp{1-\upalpha}\e^{-\i \uplambda}\cos \uplambda}$ in Theorem \ref{th:a} and combining this together with item   \ref{UF1}, we obtain the following result:
\begin{corol}\label{corol:th1:4}
Let $f\in \mathcal{K}^{\uplambda}_p(\upalpha,b)$. Then
\begin{equation*}
\myb{\frac{f'(z)}{pz^{p-1}}}^{a} \prec \frac{1}{\myp{1-z}^{2pab\myp{1-\upalpha}\e^{-\i \uplambda}\cos \uplambda}}; \qquad \myp{z\in \mathbb{U}},
\end{equation*}
or, equivalently
\begin{align*}
1+\frac{\e^{\i \uplambda}}{b\cos\uplambda}\myb{\frac{1}{p}\myp{1+\frac{zf''(z)}{f'(z)}}-1}& \prec \frac{1+\myp{1-2\upalpha}z}{1-z}\notag\\
 \Longrightarrow \myb{\frac{f'(z)}{pz^{p-1}}}^{a}&\prec\frac{1}{\myp{1-z}^{2pab\myp{1-\upalpha}\e^{-\i \uplambda}\cos \uplambda}},
\end{align*}
where  $2p ab\myp{1-\upalpha}\e^{-\i \uplambda}\cos \uplambda\in 
\mathbf{B}$  and $\q(z)=\myp{1-z}^{-2pab\myp{1-\upalpha}\e^{-\i 
\uplambda}\cos \uplambda}$ is the best dominant.
\end{corol}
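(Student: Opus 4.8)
The plan is to obtain the corollary as a direct specialization of Theorem \ref{th:a} under the substitution $\upgamma=\frac{\e^{\i\uplambda}}{pab\cos\uplambda}$, $\upmu=0$, $\upeta=a$, together with the dominant $\q(z)=\myp{1-z}^{\upnu}$ where $\upnu\coloneqq-2pab\myp{1-\upalpha}\e^{-\i\uplambda}\cos\uplambda$. First I would record that with $\upmu=0$, $\upeta=a$ the operator \eqref{diff} collapses to $\mathscr{F}_{p}^{a,0}[f](z)=\myb{\frac{f'(z)}{pz^{p-1}}}^{a}$, whose value at the origin is $1$; membership $f\in\mathcal{K}^{\uplambda}_p(\upalpha,b)$ forces $f'\neq0$, so $f\in\mathcal{A}_0$ and $\mathscr{F}_{p}^{a,0}[f](z)\neq0$ in $\mathbb{U}$. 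Hence the conclusion of Theorem \ref{th:a} reads precisely as the asserted subordination, and $\q$ is its best dominant.

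Next I would verify the hypotheses on $\q$. Univalence of $\q(z)=\myp{1-z}^{\upnu}$ follows from Lemma \ref{lem:roster} as soon as $\upnu$ lies in one of the closed disks $\abs{\upnu+1}\leq1$ or $\abs{\upnu-1}\leq1$; since $\mathbf{B}$ is symmetric about the origin this is exactly the standing hypothesis $2pab\myp{1-\upalpha}\e^{-\i\uplambda}\cos\uplambda=-\upnu\in\mathbf{B}$. The positivity requirement \eqref{eqlemma21} on $\q$ is then item \ref{UF1} of Lemma \ref{lemma2} specialized to $B=-1$ with exponent $\upnu$, so nothing new has to be proved here.

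The bulk of the work is the bookkeeping on the two sides of \eqref{th:aaaa}. On the right, $\frac{z\q'(z)}{\q(z)}=\frac{-\upnu z}{1-z}$, and because $\upgamma\myp{-\upnu}=2\myp{1-\upalpha}$ the right-hand side reduces to $1+\frac{2\myp{1-\upalpha}z}{1-z}=\frac{1+\myp{1-2\upalpha}z}{1-z}$. On the left, substituting the parameters and writing $1-p+\frac{zf''}{f'}=\myp{1+\frac{zf''}{f'}}-p$, I would show the expression collapses exactly to the defining quantity of the class,
\[
\Phi(z)\coloneqq\frac{1}{b\cos\uplambda}\myb{\frac{\e^{\i\uplambda}}{p}\myp{1+\frac{zf''(z)}{f'(z)}}-\myp{1-b}\cos\uplambda-\i\sin\uplambda},
\]
the only input being the identity $\e^{\i\uplambda}-\myp{1-b}\cos\uplambda-\i\sin\uplambda=b\cos\uplambda$, which makes the two constant contributions cancel.

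Finally comes the one conceptual step: the membership $f\in\mathcal{K}^{\uplambda}_p(\upalpha,b)$ is the inequality $\RE\Phi(z)>\upalpha$, and since $\Phi(0)=1$ while $\frac{1+\myp{1-2\upalpha}z}{1-z}$ maps $\mathbb{U}$ univalently onto the half-plane $\set{w\colon\RE w>\upalpha}$ fixing $1$ at the origin, this inequality is equivalent to $\Phi\prec\frac{1+\myp{1-2\upalpha}z}{1-z}$, i.e.\ to the subordination hypothesis \eqref{th:aaaa}. With all hypotheses of Theorem \ref{th:a} in force, the theorem yields both the subordination and the best-dominant claim. I expect the only real obstacle to be the two algebraic reductions of the preceding paragraph; the univalence, the positivity of $\q$, and the half-plane/subordination translation are all immediate from results already established.
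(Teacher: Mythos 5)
Your proposal is correct and follows exactly the route the paper intends: the paper derives this corollary by substituting $\upgamma=\frac{\e^{\i \uplambda}}{pab\cos\uplambda}$, $\upmu=0$, $\upeta=a$ and $\q(z)=\myp{1-z}^{-2pab\myp{1-\upalpha}\e^{-\i \uplambda}\cos \uplambda}$ into Theorem \ref{th:a} and invoking item \ref{UF1}, which is precisely your plan. Your write-up merely makes explicit the algebraic reductions and the half-plane translation of the class condition that the paper leaves to the reader, and all of those checks are accurate.
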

For example,  for $a=\frac12$ and $p=b=1$  we get
\[
f\in \mathcal{K}^{\uplambda}(\upalpha)\Longrightarrow \sqrt{f'(z)} \prec \frac{1}{\myp{1-z}^{\myp{1-\upalpha}\e^{-\i \uplambda}\cos \uplambda}}; \qquad \myp{z\in \mathbb{U}}.
\]
\begin{remark}
A special case of Corollary \ref{corol:th1:4} when $p=1$, $\upalpha=0$ 
and $f \in \mathcal{A}$ was given  by Aouf et al. {\cite[Corollary 
1]{AAH}}.
\end{remark}
\begin{theorem}\label{th:3}
Let $\q$ be univalent in $\mathbb{U}$ with $\q(0)=1$. Further, assume that $f \in \mathcal{A}_0$ and $\q$ satisfy the condition
\begin{equation}\label{th3:aaa}
\rs{1+\frac{z\q''(z)}{\q'(z)}}>\Max\set{0, -\RE\myp{\frac{\upsigma}{\upgamma}}}; \qquad \myp{z\in \mathbb{U}}.
\end{equation}
If the function $\uppsi$ define by
\begin{equation}\label{psi}
\Uppsi(z)\coloneqq \myb{\frac{f'(z)}{pz^{p-1}}}^{\upeta} 
\myb{\frac{z^p}{f(z)}}^{\upmu}\set{\upsigma+\upgamma
    \myb{\upeta\myp{1-p+\frac{zf''(z)}{f'(z)}}+\upmu
        \myp{p-\frac{zf'(z)}{f(z)}}}},
\end{equation}
satisfies the following subordination condition:
\begin{equation}\label{th3:aaaa}
\Uppsi(z) \prec \upsigma \q(z)+\upgamma z\q'(z); \qquad \myp{z\in \mathbb{U}}.
\end{equation}
  Then
\begin{equation*}
\mathscr{F}_{p}^{\upeta,\upmu}[f](z)\prec \q(z); \qquad \myp{z\in \mathbb{U}}.
\end{equation*}
and $\q$ is the best dominant of Eq.~\eqref{th3:aaaa}.
\end{theorem}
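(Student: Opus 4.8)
The plan is to reduce the statement to a single application of Lemma~\ref{lem1} by identifying the operator with the function $\p$ to which that lemma applies. Set $\p(z)\coloneqq\mathscr{F}_{p}^{\upeta,\upmu}[f](z)$; since $f\in\mathcal{A}_0$, the operator \eqref{diff} gives $\p\in\mathcal{H}[1,1]$, so $\p$ is analytic in $\mathbb{U}$ with $\p(0)=1=\q(0)$. The decisive observation is that the bracketed expression inside the definition \eqref{psi} of $\Uppsi$ is exactly the logarithmic derivative $z\p'(z)/\p(z)$.

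To see this I would take the principal logarithm of \eqref{diff} (all powers being principal values, as stipulated after \eqref{diff}, so that $\log\p$ splits additively) and differentiate:
\[
\frac{z\p'(z)}{\p(z)}=\upeta\myp{\frac{zf''(z)}{f'(z)}-(p-1)}+\upmu\myp{p-\frac{zf'(z)}{f(z)}}=\upeta\myp{1-p+\frac{zf''(z)}{f'(z)}}+\upmu\myp{p-\frac{zf'(z)}{f(z)}}.
\]
Substituting this into \eqref{psi} yields the clean identity
\[
\Uppsi(z)=\p(z)\set{\upsigma+\upgamma\,\frac{z\p'(z)}{\p(z)}}=\upsigma\,\p(z)+\upgamma\, z\p'(z),
\]
so that hypothesis \eqref{th3:aaaa} reads $\upsigma\p(z)+\upgamma z\p'(z)\prec\upsigma\q(z)+\upgamma z\q'(z)$.

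Next I would invoke Lemma~\ref{lem1} with the entire functions $\uptheta(w)\coloneqq\upsigma w$ and $\upvarphi(w)\coloneqq\upgamma$; both are analytic on all of $\mathbb{C}$, so one may take $\Omega=\mathbb{C}$, whence $\q(\mathbb{U})\subset\Omega$ and $\p(\mathbb{U})\subset\Omega$ are automatic and $\upvarphi(w)=\upgamma\neq0$ holds everywhere. With these choices $Q(z)=z\q'(z)\upvarphi(\q(z))=\upgamma z\q'(z)$ and $h(z)=\uptheta(\q(z))+Q(z)=\upsigma\q(z)+\upgamma z\q'(z)$, so that \eqref{eq8} coincides exactly with the subordination just derived. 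It remains to match the two structural hypotheses of Lemma~\ref{lem1} with \eqref{th3:aaa}: a short computation gives $zQ'(z)/Q(z)=1+z\q''(z)/\q'(z)$, so condition~(i) (starlikeness of $Q$) becomes $\rs{1+z\q''(z)/\q'(z)}>0$, while $\uptheta'(\q(z))/\upvarphi(\q(z))=\upsigma/\upgamma$ turns condition~(ii) into $\rs{\upsigma/\upgamma+1+z\q''(z)/\q'(z)}>0$. Taken together, these two inequalities are precisely $\rs{1+z\q''(z)/\q'(z)}>\Max\set{0,-\RE(\upsigma/\upgamma)}$, i.e. the assumption \eqref{th3:aaa}. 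Hence every hypothesis of Lemma~\ref{lem1} is met and one concludes $\p(z)\prec\q(z)$ with $\q$ the best dominant.

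I do not anticipate a genuine obstacle here. Unlike Theorem~\ref{th:a}, where the choice $\upvarphi(w)=\upgamma/w$ forced the auxiliary nonvanishing input from Lemma~\ref{lemmaBT}, here $\upvarphi$ is a nonzero constant, so no positivity or nonvanishing of $\q$ or $\p$ is required and the domain issue disappears. The only step demanding care is the logarithmic-derivative identity above; once it is in hand, the reduction to \eqref{eq8} and the verification that conditions (i) and (ii) collapse into the single hypothesis \eqref{th3:aaa} are entirely routine.
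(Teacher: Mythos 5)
Your proposal is correct and follows essentially the same route as the paper: the same choices $\uptheta(w)=\upsigma w$, $\upvarphi(w)=\upgamma$, $\Omega=\mathbb{C}$, the same $Q$ and $h$, and the same identification of conditions (i)--(ii) of Lemma~\ref{lem1} with hypothesis \eqref{th3:aaa}. You are in fact slightly more explicit than the paper in verifying the logarithmic-derivative identity $\Uppsi(z)=\upsigma\,\mathscr{F}_{p}^{\upeta,\upmu}[f](z)+\upgamma z\bigl(\mathscr{F}_{p}^{\upeta,\upmu}[f]\bigr)'(z)$, which the paper only asserts by saying the two subordinations are equivalent.
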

\begin{proof}
 If we choose $\uptheta(w)=\upsigma w$ and $\upvarphi(w)=\upgamma$, then $\uptheta, \upvarphi \in \mathcal{H}(\Omega)$; $\myp{\Omega\coloneqq \mathbb{C}}$. Also, for all $w\in \q(\mathbb{U})$, $\upvarphi(w)\neq 0$.   Define
 \begin{align*}
 Q(z)\coloneqq z \q'(z)\upvarphi(\q(z))=\upgamma z\q'(z),
 \end{align*}
The function  $\q$ is an univalent, then  $\q'(z)\neq0$ for all $z \in\mathbb{U}$,
$Q(0)=0$ and $Q'(0)=\upgamma \q'(0)\neq 0$, and from condition \eqref{th3:aaa}
 \begin{align*}
\rs{\frac{zQ'(z)}{Q(z)}}=\rs{1+\frac{z\q''(z)}{\q'(z)}}>0; \qquad \myp{z\in \mathbb{U}}.
 \end{align*}
Thus $Q$ is a starlike function in $\mathbb{U}$. Moreover, if
 \begin{align*}
 &h(z)\coloneqq\uptheta(\q(z))+Q(z)=\upsigma \q(z)+\upgamma z \q'(z),
 \intertext{then from condition \eqref{th3:aaa}, we  deduce}
& \rs{\frac{zh'(z)}{Q(z)}}=\rs{1+\frac{z\q''(z)}{\q'(z)}+\frac{\upsigma}{\upgamma}}>0; \qquad \myp{z\in \mathbb{U}}.
 \end{align*}
For $f\in \mathcal{A}_0$, the function $\mathscr{F}_{p}^{\upeta,\upmu}[f](z)$ given by \eqref{diff}, we have $\mathscr{F}_{p}^{\upeta,\upmu}[f](\mathbb{U})\subset\Omega$ and the subordinations \eqref{eq8} and \eqref{th3:aaaa} are equivalent, then  all the conditions of Lemma \ref{lem1} are satisfied and the function $\q$ is the best dominant of \eqref{th:aaaa}.
\end{proof}
Taking   $q(z)=\myp{1+Az}/\myp{1+Bz}$; $\left(-1\leq B<A\leq1,\: z\in \mathbb{U}\right)$ in Theorem \ref{th:3} and then applying Lemma \ref{lemma4},  we obtain the following result:
\begin{corol}
Let $-1\leq B<A\leq1$ and 
\[
\rs{\frac{\upsigma}{\upgamma}}\geq \frac{|B|-1}{|B|+1}.
\]
  If $f\in \mathcal{A}_0$ and the  function $\Uppsi$ given by \eqref{psi} satisfies the subordination
\begin{align}
&\Uppsi(z) \prec \upsigma\myp{\frac{1+Az}{1+Bz}}+ \frac{\upgamma\myp{A-B} z}{\myp{1+Bz}^2}; \qquad \myp{z\in \mathbb{U}},\label{eqcor1th31}
\intertext{then}
 &\mathscr{F}_{p}^{\upeta,\upmu}[f](z) \prec \frac{1+Az}{1+Bz}; \qquad \myp{z\in \mathbb{U}}.\notag
\end{align}
 and $\myp{1+Az}/\myp{1+Bz}$  is the best dominant of Eq.~\eqref{eqcor1th31}.
\end{corol}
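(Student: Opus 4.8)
The plan is to specialize Theorem~\ref{th:3} to the choice $\q(z)=\myp{1+Az}/\myp{1+Bz}$ and to verify, one at a time, the three hypotheses that Theorem~\ref{th:3} imposes on $\q$; Lemma~\ref{lemma4} will supply exactly the ingredient needed for the condition \eqref{th3:aaa}. No genuinely new estimate is required — the corollary is a direct instantiation — so the task is organizational: match this $\q$ against the template of the theorem.

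First I would check the structural hypotheses on $\q$. The map $\q(z)=\myp{1+Az}/\myp{1+Bz}$ is bilinear and therefore univalent in $\mathbb{U}$, and $\q(0)=1$, as required by Theorem~\ref{th:3}. Next I would compute the subordinant appearing on the right of \eqref{th3:aaaa}. By the quotient rule,
\[
\q'(z)=\frac{A-B}{\myp{1+Bz}^2}, \qquad\text{so}\qquad z\q'(z)=\frac{\myp{A-B}z}{\myp{1+Bz}^2}.
\]
Consequently
\[
\upsigma\,\q(z)+\upgamma\, z\q'(z)
=\upsigma\myp{\frac{1+Az}{1+Bz}}+\frac{\upgamma\myp{A-B}z}{\myp{1+Bz}^2},
\]
which is precisely the right-hand side of the subordination \eqref{eqcor1th31} assumed in the corollary. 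Thus the hypothesis \eqref{th3:aaaa} of Theorem~\ref{th:3} coincides with \eqref{eqcor1th31}.

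The one step that carries content is the verification of condition \eqref{th3:aaa}. Here I would invoke Lemma~\ref{lemma4} with $\upzeta=\upsigma/\upgamma$: that lemma states precisely that, for $\q(z)=\myp{1+Az}/\myp{1+Bz}$,
\[
\rs{1+\frac{z\q''(z)}{\q'(z)}}>\Max\set{0,-\RE\myp{\frac{\upsigma}{\upgamma}}}
\]
is equivalent to $\rs{\upsigma/\upgamma}\geq\myp{\abs{B}-1}/\myp{\abs{B}+1}$, which is exactly the standing hypothesis of the corollary. Hence \eqref{th3:aaa} holds. With all three hypotheses of Theorem~\ref{th:3} now confirmed for this $\q$, the theorem yields $\mathscr{F}_{p}^{\upeta,\upmu}[f](z)\prec\myp{1+Az}/\myp{1+Bz}$, and asserts that $\myp{1+Az}/\myp{1+Bz}$ is the best dominant of \eqref{eqcor1th31}. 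I do not anticipate any real obstacle; the only point requiring care is the clean identification of $\upzeta$ with $\upsigma/\upgamma$ so that Lemma~\ref{lemma4} applies verbatim.
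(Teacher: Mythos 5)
Your proposal is correct and matches the paper's route exactly: the paper derives this corollary by taking $\q(z)=\myp{1+Az}/\myp{1+Bz}$ in Theorem~\ref{th:3} and applying Lemma~\ref{lemma4} with $\upzeta=\upsigma/\upgamma$, which is precisely what you do. Your write-up simply makes explicit the computation of $z\q'(z)$ and the matching of hypotheses that the paper leaves implicit.
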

For $\q(z)=\e^{Cz}$; $\myp{|C|<\uppi}$ in Theorem \ref{th:3}, we obtain the following corollary.
\begin{corol}
Let 
\[
\rs{\frac{\upsigma}{\upgamma}}\geq |C|-1; \qquad \myp{|C|<\uppi}.
\]
If $f\in \mathcal{A}_0$ and the  function $\Uppsi$ given by \eqref{psi} satisfies the subordination
\begin{align}
&\Uppsi(z) \prec \myp{\upsigma+\upgamma C z}\e^{Cz}; \qquad \myp{z\in \mathbb{U}},\label{eqcor1th32}
\intertext{then}
 &\mathscr{F}_{p}^{\upeta,\upmu}[f](z) \prec \e^{Cz}; \qquad \myp{z\in \mathbb{U}},\notag
\end{align}
 and $\e^{Cz}$  is the best dominant of Eq.~\eqref{eqcor1th32}.
\end{corol}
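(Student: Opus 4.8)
The plan is to obtain this corollary as the special case $\q(z)=\e^{Cz}$ of Theorem \ref{th:3}; all that is required is to check that this $\q$ meets the two standing hypotheses of that theorem and then to recognise the displayed subordinant as $\upsigma\q+\upgamma z\q'$. First I would note that $\q(0)=1$ and that $\q(z)=\e^{Cz}$ is univalent in $\mathbb{U}$: if $\e^{Cz_1}=\e^{Cz_2}$ then $C(z_1-z_2)\in 2\uppi\i\mathbb{Z}$, whereas $\abs{C(z_1-z_2)}<2\abs{C}<2\uppi$ for $z_1,z_2\in\mathbb{U}$, so $z_1=z_2$. This is exactly where the hypothesis $\abs{C}<\uppi$ is used.

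Next comes the core verification, namely condition \eqref{th3:aaa}. Since $\q'(z)=C\e^{Cz}$ and $\q''(z)=C^2\e^{Cz}$, a one-line computation gives
\[
1+\frac{z\q''(z)}{\q'(z)}=1+Cz; \qquad \myp{z\in\mathbb{U}}.
\]
As $z$ runs through $\mathbb{U}$, the point $Cz$ fills the open disc of radius $\abs{C}$ about the origin, so $\rs{1+Cz}$ ranges over $\myp{1-\abs{C},\,1+\abs{C}}$, with infimum $1-\abs{C}$. Therefore \eqref{th3:aaa} holds as soon as $1-\abs{C}\geq\Max\set{0,-\RE\myp{\upsigma/\upgamma}}$, and I would derive this from the standing inequality $\rs{\upsigma/\upgamma}\geq\abs{C}-1$ by rewriting it as $-\RE\myp{\upsigma/\upgamma}\leq 1-\abs{C}$.

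Finally, substituting $\q(z)=\e^{Cz}$ into the subordinant of Theorem \ref{th:3} gives
\[
\upsigma\q(z)+\upgamma z\q'(z)=\myp{\upsigma+\upgamma C z}\e^{Cz},
\]
so the subordination \eqref{eqcor1th32} assumed here is precisely \eqref{th3:aaaa}; an application of Theorem \ref{th:3} then yields $\mathscr{F}_{p}^{\upeta,\upmu}[f](z)\prec\e^{Cz}$ with $\e^{Cz}$ as best dominant. I expect the only delicate point to be the bookkeeping around the $\Max\set{0,\cdot}$ in \eqref{th3:aaa}: the inequality $\rs{\upsigma/\upgamma}\geq\abs{C}-1$ immediately controls the entry $-\RE\myp{\upsigma/\upgamma}$, but one must also ensure the entry $0$ is dominated, i.e. that $1-\abs{C}\geq 0$; this is automatic when $\RE\myp{\upsigma/\upgamma}\leq 0$ and otherwise forces the additional restriction $\abs{C}\leq 1$, so the interplay between the size of $\abs{C}$ and the sign of $\RE\myp{\upsigma/\upgamma}$ is where care is needed.
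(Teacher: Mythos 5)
Your proposal is correct and is precisely the paper's route: the corollary is stated as an immediate specialization of Theorem \ref{th:3} with $\q(z)=\e^{Cz}$ (the paper supplies no further detail), so your univalence check via $\abs{C}<\uppi$ and the computation $1+z\q''(z)/\q'(z)=1+Cz$ are exactly what is implicitly intended. Your closing caveat is well taken and in fact exposes an oversight in the printed statement: for $1<\abs{C}<\uppi$ with $\RE\myp{\upsigma/\upgamma}>0$ the hypothesis $\rs{\upsigma/\upgamma}\geq\abs{C}-1$ does not yield condition \eqref{th3:aaa}, since the entry $0$ in the maximum forces $1-\abs{C}\geq 0$; thus the corollary actually requires $\abs{C}\leq 1$ unless $\RE\myp{\upsigma/\upgamma}\leq 0$, a restriction the paper omits.
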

Taking   $q(z)=\myp{1+Az}/\myp{1+Bz}$; $\left(-1< B<A\leq1,\: z\in \mathbb{U}\right)$ in Theorem \ref{th:3},  we obtain the following result:
\begin{corol}
Let $-1< B<A\leq1$ and $\rs{u-vB}\geq\abs{v-uB}$ where $u=1+\frac{\upsigma}{\upgamma}$ and $v=\frac{B\myp{\upsigma-\upgamma}}{\upgamma}$. If $f\in \mathcal{A}_0$ and the  function $\Uppsi$ given by \eqref{psi} satisfies the subordination
\begin{align}
&\Uppsi(z) \prec \upsigma\myp{\frac{1+Az}{1+Bz}}+ \frac{\myp{A-B}\upgamma z}{\myp{1+Bz}^2}; \qquad \myp{z\in \mathbb{U}},\label{eqcor1th33}
\intertext{then}
 &\mathscr{F}_{p}^{\upeta,\upmu}[f](z) \prec \frac{1+Az}{1+Bz}; \qquad \myp{z\in \mathbb{U}},\notag
\end{align}
 and $\myp{1+Az}/\myp{1+Bz}$  is the best dominant of Eq.~\eqref{eqcor1th33}.
\end{corol}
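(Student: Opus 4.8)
The plan is to specialize Theorem~\ref{th:3} to the choice $\q(z)=\myp{1+Az}/\myp{1+Bz}$ and to verify its single nontrivial hypothesis \eqref{th3:aaa} by means of Lemma~\ref{lemma3}. Since $-1<B<A\leq 1$ forces $A\neq B$, this $\q$ is a M\"obius map, hence univalent in $\mathbb{U}$, and $\q(0)=1$; thus the standing requirements on $\q$ in Theorem~\ref{th:3} hold at once. Note also that here $\Omega=\mathbb{C}$, so no extra nonvanishing condition on $\q$ is needed.

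Next I would carry out the routine derivative computation. From $\q'(z)=\myp{A-B}/\myp{1+Bz}^2$ one obtains $z\q''(z)/\q'(z)=-2Bz/\myp{1+Bz}$, whence
\[
1+\frac{z\q''(z)}{\q'(z)}=\frac{1-Bz}{1+Bz}.
\]
Adding $\upsigma/\upgamma$ and clearing the denominator gives the key identity
\[
1+\frac{z\q''(z)}{\q'(z)}+\frac{\upsigma}{\upgamma}=\frac{u+vz}{1+Bz},
\qquad u=1+\frac{\upsigma}{\upgamma},\quad v=\frac{B\myp{\upsigma-\upgamma}}{\upgamma},
\]
which is precisely the function $\upomega$ treated in Lemma~\ref{lemma3}, with the $u,v$ of the hypothesis.

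The crux, and the only place where the new hypothesis enters, is the verification of \eqref{th3:aaa}. Because $\abs{B}<1$, the function $\myp{1-Bz}/\myp{1+Bz}$ maps $\mathbb{U}$ into the right half-plane (as in the proof of Lemma~\ref{lemma4}), so $\rs{1+z\q''(z)/\q'(z)}>0$; this settles the $0$-branch of the maximum. For the $-\RE\myp{\upsigma/\upgamma}$-branch I would apply Lemma~\ref{lemma3} to $\upomega(z)=\myp{u+vz}/\myp{1+Bz}$: its premise is exactly the assumed inequality $\rs{u-vB}\geq\abs{v-uB}$, and it yields $\rs{\upomega(z)}>0$, i.e.\ $\rs{1+z\q''(z)/\q'(z)}>-\RE\myp{\upsigma/\upgamma}$. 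Taking the two branches together delivers \eqref{th3:aaa}.

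Finally I would match the right-hand side: for this $\q$ one computes $\upsigma\q(z)+\upgamma z\q'(z)=\upsigma\myp{1+Az}/\myp{1+Bz}+\upgamma\myp{A-B}z/\myp{1+Bz}^2$, which is the dominant on the right of \eqref{eqcor1th33}. Hence every hypothesis of Theorem~\ref{th:3} is met, and its conclusion gives both $\mathscr{F}_{p}^{\upeta,\upmu}[f](z)\prec\myp{1+Az}/\myp{1+Bz}$ and the best-dominant assertion. I expect no obstacle beyond careful bookkeeping; the only genuinely new content is the passage from the abstract requirement \eqref{th3:aaa} to the directly checkable inequality $\rs{u-vB}\geq\abs{v-uB}$, which is supplied cleanly by Lemma~\ref{lemma3}.
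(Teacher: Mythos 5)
Your proposal is correct and follows essentially the same route as the paper: specialize Theorem~\ref{th:3} to $\q(z)=\myp{1+Az}/\myp{1+Bz}$, compute $1+z\q''(z)/\q'(z)+\upsigma/\upgamma=\myp{u+vz}/\myp{1+Bz}$ with the stated $u,v$, and invoke Lemma~\ref{lemma3}. Your explicit check of the $0$-branch of the maximum in \eqref{th3:aaa} (via $\myp{1-Bz}/\myp{1+Bz}$ mapping $\mathbb{U}$ into the right half-plane) is a small point the paper leaves implicit, but it does not change the argument.
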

\begin{proof}
Let $\q(z)=\myp{1+Az}/\myp{1+Bz}$, then we have
\[
z\q'(z)=\frac{\myp{A-B}z}{\myp{1+Bz}^2}\qquad \text{and}\qquad 1+\frac{z\q''(z)}{\q'(z)}=\frac{1-Bz}{1+Bz}.
\]
Thus
\[
\frac{\upsigma}{\upgamma}+1+\frac{z\q''(z)}{\q'(z)}=\frac{u+vz}{1+Bz},
\]
where $u=1+\frac{\upsigma}{\upgamma}$ and 
$v=\frac{B\myp{\upsigma-\upgamma}}{\upgamma}$. According to Lemma 
\ref{lemma3}, it follows that
\begin{equation*}
\rs{\frac{\upsigma}{\upgamma}+1+\frac{z\q''(z)}{\q'(z)}}>\frac{\rs{u-vB}-\abs{v-uB}}{1-B^2}\geq 0.
\end{equation*}
By using Theorem \ref{th:3}, we obtain the required result.
\end{proof}
\section{Superordination Results}\label{sec:sup}
\begin{theorem}\label{th:2}
Let $\q$ be  convex  function  in $\mathbb{U}$ with $\q(0)=1$. Further, assume that $\rs{\frac{\upsigma}{\upgamma}}>0$ and the functions  $f \in \mathcal{A}_0$ and $\q$ satisfy the conditions
\begin{equation*}
\mathscr{F}_{p}^{\upeta,\upmu}[f](z)\in\mathcal{H}[\q(0),1]\cap \mathcal{Q}; \qquad \myp{z\in \mathbb{U}}.
\end{equation*}
If the function $\Uppsi$ given by \eqref{psi}
is univalent in $\mathbb{U}$, and satisfies the following subordination condition:
\begin{equation}\label{th2:aaaa}
\upsigma \q(z)+\upgamma z\q'(z) \prec \Uppsi(z); \qquad \myp{z\in \mathbb{U}}.
\end{equation}
  Then
\begin{equation*}
\q(z)\prec \mathscr{F}_{p}^{\upeta,\upmu}[f](z); \qquad \myp{z\in \mathbb{U}},
\end{equation*}
and $\q$ is the best subordinant of Eq.~\eqref{th2:aaaa}.
\end{theorem}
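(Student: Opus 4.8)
The plan is to reduce the statement to the superordination criterion of Lemma~\ref{SRS}, exactly in the way Theorem~\ref{th:a} reduced its subordination statement to Lemma~\ref{lem1}. The whole argument hinges on recognizing that $\Uppsi$ is assembled from $\mathscr{F}_{p}^{\upeta,\upmu}[f]$ in precisely the shape $\upsigma\,(\cdot)+\upgamma z(\cdot)'$ required by Lemma~\ref{SRS}.

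First I would set $P(z)\coloneqq\mathscr{F}_{p}^{\upeta,\upmu}[f](z)$ and compute its logarithmic derivative. From \eqref{diff} we have $\log P=\upeta\log\myp{\frac{f'(z)}{pz^{p-1}}}+\upmu\log\myp{\frac{z^p}{f(z)}}$ (principal branches), so differentiating gives
\[
\frac{zP'(z)}{P(z)}=\upeta\myp{1-p+\frac{zf''(z)}{f'(z)}}+\upmu\myp{p-\frac{zf'(z)}{f(z)}}.
\]
Comparing this with \eqref{psi}, the right-hand side is exactly the bracketed factor appearing there, and hence $\Uppsi(z)=\upsigma P(z)+\upgamma zP'(z)$. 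This identity is the key algebraic step and the only place where the specific form of the operator enters.

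Next I would rewrite the hypothesis \eqref{th2:aaaa}. Since $\rs{\upsigma/\upgamma}>0$ forces $\upsigma\neq0$, and since division by a nonzero constant preserves subordination (the same Schwarz map $\upomega$ witnesses $g\prec h$ and $g/\upsigma\prec h/\upsigma$), dividing \eqref{th2:aaaa} by $\upsigma$ turns it into
\[
\q(z)+\frac{\upgamma}{\upsigma}\,z\q'(z)\prec P(z)+\frac{\upgamma}{\upsigma}\,zP'(z).
\]
Setting $\upgamma'\coloneqq\upgamma/\upsigma$, I would note that $\rs{\upsigma/\upgamma}>0$ is equivalent to $\rs{\upgamma'}>0$, because for any nonzero $w$ one has $\rs{w}>0\iff\rs{1/w}>0$. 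Thus $\upgamma'$ satisfies the hypothesis of Lemma~\ref{SRS}.

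Finally I would check the remaining hypotheses of Lemma~\ref{SRS} with $\p\coloneqq P$ and $\upgamma'$ in the role of $\upgamma$: the function $\q$ is convex by assumption, $P\in\mathcal{H}[\q(0),1]\cap\mathcal{Q}$ is assumed, and $P+\upgamma' zP'=\Uppsi/\upsigma$ is univalent because $\Uppsi$ is. Lemma~\ref{SRS} then yields $\q\prec P=\mathscr{F}_{p}^{\upeta,\upmu}[f]$ with $\q$ the best subordinant, which is the assertion. I expect no serious obstacle; the only points needing care are the bookkeeping around the constant $\upsigma$ (that dividing by it preserves subordination and that $\rs{\upsigma/\upgamma}>0$ survives the passage to $\upgamma/\upsigma$) together with getting the logarithmic-derivative computation right.
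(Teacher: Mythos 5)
Your proposal is correct and follows essentially the same route as the paper: identify $\Uppsi(z)=\upsigma\,\mathscr{F}_{p}^{\upeta,\upmu}[f](z)+\upgamma z\bigl(\mathscr{F}_{p}^{\upeta,\upmu}[f]\bigr)'(z)$ via the logarithmic derivative and then invoke Lemma~\ref{SRS}. You are in fact slightly more careful than the paper, which applies Lemma~\ref{SRS} directly to $\upsigma\q+\upgamma z\q'\prec\upsigma g+\upgamma zg'$ without spelling out the normalization by $\upsigma$ and the equivalence $\rs{\upsigma/\upgamma}>0\iff\rs{\upgamma/\upsigma}>0$ that you make explicit.
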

\begin{proof}
Let $f\in \mathcal{A}_0$. Define the function $g$ by
\[
g(z)\coloneqq \mathscr{F}_{p}^{\upeta,\upmu}[f](z) =\myb{\frac{f'(z)}{pz^{p-1}}}^{\upeta} \myb{\frac{z^p}{f(z)}}^{\upmu}; \qquad \myp{z\in \mathbb{U}}.
\]
 Differentiating $g(z)$ logarithmically with respect to $z$, we get
\[
\frac{zg'(z)}{g(z)}=\upeta\myp{1-p+\frac{zf''(z)}{f'(z)}}+\upmu\myp{p-\frac{zf'(z)}{f(z)}}; \qquad \myp{z\in \mathbb{U}},
\]
hence the subordination \eqref{th2:aaaa} is equivalent to
\[
\upsigma \q(z)+\upgamma z\q'(z) \prec \upsigma g(z)+\upgamma zg'(z).
\]
By using Lemma \ref{SRS}, we obtain the required result.
\end{proof}
Taking  $\upeta=1$ and $\upmu=0$ in Theorem \ref{th:2},  we obtain the following result:
\begin{corol}
Let $\q$ be  convex  function in $\mathbb{U}$ with $\q(0)=1$. Further, assume that the functions  $f \in \mathcal{A}_p$ and $\q$ satisfy the conditions
\begin{align*}
\frac{f'(z)}{pz^{p-1}}\in\mathcal{H}[\q(0),1]\cap \mathcal{Q}; \qquad \myp{z\in \mathbb{U}}.
\end{align*}
If the function
\[
\phi(z)\coloneqq \frac{f'(z)}{pz^{p-1}}\myb{2-p+\frac{zf''(z)}{f'(z)}}=\myb{\frac{zf'(z)}{pz^{p-1}}}',
\]
 is univalent in $\mathbb{U}$, and satisfies the following subordination condition:
\begin{align}
 \myb{z\q(z)}'  &\prec\myb{\frac{zf'(z)}{pz^{p-1}}}'; \qquad \myp{z\in \mathbb{U}}.\label{corollary:th2:aaaa}
\intertext{Then}
\q(z)  & \prec \frac{f'(z)}{pz^{p-1}}; \qquad \myp{z\in \mathbb{U}},\notag
\end{align}
and $\q$ is the best subordinant of \eqref{corollary:th2:aaaa}.
\end{corol}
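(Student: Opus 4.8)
The plan is to deduce this corollary directly from Theorem \ref{th:2} by specialising its parameters to $\upeta=1$, $\upmu=0$ and $\upsigma=\upgamma=1$; with these choices every hypothesis and conclusion of the theorem collapses into the stated form, so the proof is essentially a bookkeeping verification rather than a fresh argument.

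First I would evaluate the operator at these values. Since $\myb{z^p/f(z)}^{0}=1$, we get $\mathscr{F}_{p}^{1,0}[f](z)=f'(z)/\myp{pz^{p-1}}$, and for $f\in\A$ this has the expansion $1+\cdots$, so its value at $z=0$ is automatically $1$; consequently the membership $f\in\mathcal{A}_0$ demanded by Theorem \ref{th:2} reduces to $f\in\A$, which explains why the corollary is phrased for $f\in\mathcal{A}_p$. Next I would substitute the same values into the expression \eqref{psi} for $\Uppsi$: the factor $\myb{z^p/f(z)}^{\upmu}$ becomes $1$ and the summand $\upmu\myp{p-zf'(z)/f(z)}$ vanishes, leaving
\[
\Uppsi(z)=\frac{f'(z)}{pz^{p-1}}\set{1+\myp{1-p+\frac{zf''(z)}{f'(z)}}}=\frac{f'(z)}{pz^{p-1}}\myp{2-p+\frac{zf''(z)}{f'(z)}}.
\]
The one genuine computation is then the identity $\Uppsi(z)=\phi(z)$, which I would verify from the product rule $g(z)+zg'(z)=\myb{zg(z)}'$ applied to $g(z)=f'(z)/\myp{pz^{p-1}}$: logarithmic differentiation gives $zg'(z)/g(z)=1-p+zf''(z)/f'(z)$, whence $g(z)+zg'(z)=g(z)\myp{2-p+zf''(z)/f'(z)}$, matching both the displayed $\Uppsi$ and the definition $\phi(z)=\myb{zf'(z)/\myp{pz^{p-1}}}'$. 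In the same way the left-hand side of \eqref{th2:aaaa} simplifies, since $\upsigma\q(z)+\upgamma z\q'(z)=\q(z)+z\q'(z)=\myb{z\q(z)}'$, so that the subordination \eqref{th2:aaaa} becomes precisely \eqref{corollary:th2:aaaa}.

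Finally I would check that the remaining hypotheses transfer verbatim: $\rs{\upsigma/\upgamma}=1>0$ holds; $\q$ is convex with $\q(0)=1$; and the assumptions $g\in\mathcal{H}[\q(0),1]\cap\mathcal{Q}$ together with the univalence of $\Uppsi=\phi$ are exactly the standing hypotheses of the corollary. Theorem \ref{th:2} then yields $\q(z)\prec\mathscr{F}_{p}^{1,0}[f](z)=f'(z)/\myp{pz^{p-1}}$ with $\q$ the best subordinant, which is the assertion. There is no real obstacle in this argument; the only step that is not a pure substitution is the algebraic identity $\Uppsi=\phi$, and even that is the routine product-rule computation indicated above.
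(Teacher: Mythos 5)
Your proposal is correct and follows exactly the route the paper intends: the corollary is obtained by specialising Theorem \ref{th:2} to $\upeta=1$, $\upmu=0$ (and, implicitly, $\upsigma=\upgamma=1$, which you rightly make explicit), with the identity $\Uppsi(z)=\bigl[zf'(z)/(pz^{p-1})\bigr]'$ verified by the product rule. The paper gives no further argument, so your bookkeeping verification is, if anything, more complete than the source.
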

Taking  $\upmu=\upeta=1$ in Theorem \ref{th:2},  we obtain the following result:
\begin{corol}
Let $\q$ be  convex  function in $\mathbb{U}$ with $\q(0)=1$. Further, assume that  the functions  $f \in \mathcal{A}_p$ and $\q$ satisfy the conditions
\begin{align*}
\frac{1}{p}\frac{zf'(z)}{f(z)}\in\mathcal{H}[\q(0),1]\cap \mathcal{Q}; \qquad \myp{z\in \mathbb{U}}.
\end{align*}
If the function
\[
\uppsi(z)\coloneqq \frac{1}{p}\myb{2+\frac{zf''(z)}{f'(z)}-\frac{zf'(z)}{f(z)}}\frac{zf'(z)}{f(z)}=\myb{\frac{1}{p}\frac{z^2f'(z)}{f(z)}}',
\]
 is univalent in $\mathbb{U}$, and satisfies the following subordination condition:
\begin{align}
 \myb{z\q(z)}'&\prec \myb{\frac{1}{p}\frac{z^2f'(z)}{f(z)}}'; \qquad \myp{z\in \mathbb{U}}.\label{corollary1:th2:aaaa}
\intertext{Then}
\q(z)  & \prec \frac{1}{p}\frac{zf'(z)}{f(z)}; \qquad \myp{z\in \mathbb{U}},\notag
\end{align}
and $\q$ is the best subordinant of Eq.~\eqref{corollary1:th2:aaaa}.
\end{corol}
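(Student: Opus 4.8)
The plan is to obtain this corollary as a direct specialization of Theorem~\ref{th:2} with the parameter choices $\upeta=\upmu=1$ and $\upsigma=\upgamma=1$. With $\upsigma=\upgamma=1$ the standing hypothesis $\rs{\upsigma/\upgamma}>0$ reads $\RE(1)=1>0$ and is therefore automatic, which is why the corollary imposes no condition on $\upsigma,\upgamma$. First I would record the value of the operator at $\upeta=\upmu=1$, namely
\[
\mathscr{F}_{p}^{1,1}[f](z)=\myb{\frac{f'(z)}{pz^{p-1}}}\myb{\frac{z^p}{f(z)}}=\frac{1}{p}\frac{zf'(z)}{f(z)}.
\]
Since $f$ has the form \eqref{funh}, the quotient $zf'(z)/f(z)$ tends to $p$ as $z\to0$, so this operator lies in $\mathcal{H}[1,1]$ and satisfies $\mathscr{F}_{p}^{1,1}[f](z)\bigr|_{z=0}=1$; combined with the hypothesis $\frac{1}{p}\frac{zf'(z)}{f(z)}\in\mathcal{H}[\q(0),1]\cap\mathcal{Q}$ this places $f$ in the class $\mathcal{A}_0$ required by Theorem~\ref{th:2}, which is exactly why the weaker membership $f\in\A$ suffices in the statement.

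Next I would evaluate the auxiliary function $\Uppsi$ of \eqref{psi} at these parameters. Substituting $\upeta=\upmu=1$ and $\upsigma=\upgamma=1$, the inner bracket collapses via the cancellation $(1-p)+p=1$, and one finds
\[
\Uppsi(z)=\frac{1}{p}\frac{zf'(z)}{f(z)}\myb{2+\frac{zf''(z)}{f'(z)}-\frac{zf'(z)}{f(z)}}=\uppsi(z).
\]
To confirm the identity $\uppsi(z)=\myb{\frac{1}{p}\frac{z^2f'(z)}{f(z)}}'$ asserted in the statement, I would set $g(z)=\frac{1}{p}\frac{zf'(z)}{f(z)}$ and reuse the logarithmic-derivative computation already performed in the proof of Theorem~\ref{th:2}, which gives $\frac{zg'(z)}{g(z)}=1+\frac{zf''(z)}{f'(z)}-\frac{zf'(z)}{f(z)}$. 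Then $g(z)+zg'(z)=\uppsi(z)$, while $zg(z)=\frac{1}{p}\frac{z^2f'(z)}{f(z)}$, so $[zg(z)]'=g(z)+zg'(z)=\uppsi(z)$, as required.

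Finally, because $\upsigma\q(z)+\upgamma z\q'(z)=\q(z)+z\q'(z)=[z\q(z)]'$, the superordination hypothesis \eqref{th2:aaaa} becomes precisely the assumption $[z\q(z)]'\prec\myb{\frac{1}{p}\frac{z^2f'(z)}{f(z)}}'$ of the corollary; the univalence of $\uppsi=\Uppsi$ is granted by hypothesis, and $\q$ is convex with $\q(0)=1$. Invoking Theorem~\ref{th:2} then delivers $\q(z)\prec\frac{1}{p}\frac{zf'(z)}{f(z)}$ with $\q$ the best subordinant. I expect no genuine obstacle: the whole argument is a substitution of parameters, and the only points needing care are the two elementary identities—the simplification of the inner bracket of $\Uppsi$ and the product-rule verification that $\frac{1}{p}\frac{z^2f'(z)}{f(z)}$ has derivative $\uppsi$—both of which are routine computations.
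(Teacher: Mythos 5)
Your proposal is correct and matches the paper's intent exactly: the paper derives this corollary by simply taking $\upeta=\upmu=1$ (and implicitly $\upsigma=\upgamma=1$) in Theorem \ref{th:2}, and your verification of the identities $\mathscr{F}_{p}^{1,1}[f](z)=\frac{1}{p}\frac{zf'(z)}{f(z)}$, $\Uppsi=\uppsi=\bigl[\tfrac{1}{p}\tfrac{z^{2}f'(z)}{f(z)}\bigr]'$ and $\upsigma\q+\upgamma z\q'=[z\q]'$ supplies the routine details the paper leaves unstated. No gaps.
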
\medskip
Combining Theorem \ref{th:3} with Theorem \ref{th:2}, we obtain the following ``sandwich result''.
\begin{theorem}
Let $\q_1$   and $\q_2$ be  convex and convex (univalent) functions in $\mathbb{U}$ with $\q_1(0)=\q_2(0)=1$ respectively. Further, assume that $\rs{\frac{\upsigma}{\upgamma}}>0$ and  function  $f \in \mathcal{A}_0$ satisfy the condition
\begin{equation*}
\mathscr{F}_{p}^{\upeta,\upmu}[f](z)\in\mathcal{H}[1,1]\cap \mathcal{Q}; \qquad \myp{z\in \mathbb{U}}.
\end{equation*}
If the function $\Uppsi$ given by \eqref{psi}
is univalent in $\mathbb{U}$, and satisfies the following subordination condition:
\begin{equation}\label{th4:aaaa}
\upsigma \q_1(z)+\upgamma z\q'_1(z) \prec \Uppsi(z)\prec \upsigma \q_2(z)+\upgamma z\q'_2(z); \qquad \myp{z\in \mathbb{U}}.
\end{equation}
  Then
\begin{equation*}
\q_1(z)\prec \mathscr{F}_{p}^{\upeta,\upmu}[f](z)\prec \q_2(z); \qquad \myp{z\in \mathbb{U}},
\end{equation*}
and $\q_1$ and $\q_2$ are respectively the best subordinant and  best 
dominant of Eq.~\eqref{th4:aaaa}.
\end{theorem}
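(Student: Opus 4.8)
The plan is to split the chain subordination \eqref{th4:aaaa} into its two constituent one-sided relations and to recognise each as the exact hypothesis of a theorem already established in Sections~\ref{sec:sub} and~\ref{sec:sup}. Indeed, \eqref{th4:aaaa} is equivalent to the simultaneous validity of
\[
\upsigma \q_1(z)+\upgamma z\q'_1(z)\prec \Uppsi(z) \qquad\text{and}\qquad \Uppsi(z)\prec \upsigma \q_2(z)+\upgamma z\q'_2(z),
\]
so I would handle the right-hand subordination by the dominant result Theorem~\ref{th:3} and the left-hand subordination by the subordinant result Theorem~\ref{th:2}.

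First I would verify that $\q_2$ meets the running hypothesis \eqref{th3:aaa} of Theorem~\ref{th:3}. Since $\q_2$ is convex (univalent) in $\mathbb{U}$ we have $\rs{1+\frac{z\q''_2(z)}{\q'_2(z)}}>0$, while the standing assumption $\rs{\frac{\upsigma}{\upgamma}}>0$ forces $\Max\set{0,-\RE\myp{\frac{\upsigma}{\upgamma}}}=0$. Hence \eqref{th3:aaa} holds for $\q=\q_2$, and together with the univalence of $\Uppsi$ and the membership $f\in\mathcal{A}_0$ all hypotheses of Theorem~\ref{th:3} are in force. Applying it to $\q=\q_2$ yields $\mathscr{F}_{p}^{\upeta,\upmu}[f](z)\prec\q_2(z)$ with $\q_2$ the best dominant.

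Next I would apply Theorem~\ref{th:2} with $\q=\q_1$. The hypotheses match verbatim: $\q_1$ is convex with $\q_1(0)=1$, we have $\rs{\frac{\upsigma}{\upgamma}}>0$, the operator satisfies $\mathscr{F}_{p}^{\upeta,\upmu}[f](z)\in\mathcal{H}[1,1]\cap\mathcal{Q}=\mathcal{H}[\q_1(0),1]\cap\mathcal{Q}$, the function $\Uppsi$ is univalent, and the left-hand subordination displayed above is precisely the superordination condition \eqref{th2:aaaa}. Theorem~\ref{th:2} then gives $\q_1(z)\prec\mathscr{F}_{p}^{\upeta,\upmu}[f](z)$ with $\q_1$ the best subordinant.

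Concatenating the two conclusions produces the desired sandwich $\q_1(z)\prec\mathscr{F}_{p}^{\upeta,\upmu}[f](z)\prec\q_2(z)$, and the extremality assertions (best subordinant $\q_1$, best dominant $\q_2$) are inherited directly from the two cited theorems. I do not anticipate a genuine obstacle here, since the argument is essentially a matter of matching each half of \eqref{th4:aaaa} to the appropriate prior result; the only step requiring a line of reasoning rather than pure bookkeeping is the reduction of condition \eqref{th3:aaa} to the convexity of $\q_2$ by means of $\rs{\frac{\upsigma}{\upgamma}}>0$, which is exactly the observation carried out above.
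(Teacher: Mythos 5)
Your proposal is correct and follows the paper's own route exactly: the paper offers no written proof beyond the remark that the result follows by ``combining Theorem~\ref{th:3} with Theorem~\ref{th:2}'', which is precisely your decomposition of \eqref{th4:aaaa} into its two one-sided halves, with the observation that convexity of $\q_2$ together with $\rs{\frac{\upsigma}{\upgamma}}>0$ yields condition \eqref{th3:aaa}. Your verification of that last reduction is in fact more explicit than anything the paper provides.
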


\end{document}